\newtheorem{theorem}{Theorem}
\newtheorem{lemma}{Lemma}
\theoremstyle{definition}
\newtheorem{step}{\textsc{Step}}
\crefname{step}{Step}{Step}
\crefname{experiment}{Experiment}{Experiment}
\crefname{figure}{Figure}{Figure}
\title[Numerical reconstruction of radiative sourcest]{Numerical reconstruction of radiative sources from partial boundary measurements}
\author[H.~Fujiwara]{Hiroshi Fujiwara}
\address{Graduate School of Informatics,  Kyoto University, Yoshida Honmachi, Sakyo-ku, Kyoto 606-8501, Japan}
\email{fujiwara@acs.i.kyoto-u.ac.jp}
\author[K.~Sadiq]{Kamran Sadiq}
\address{Computational Science Center, Faculty of Mathematics, University of Vienna,  Oskar-Morgenstern-Platz 1, 1090 Vienna, Austria}
\email{kamran.sadiq@univie.ac.at}
\author[A.~Tamasan]{Alexandru Tamasan}
\address{Department of Mathematics, University of Central Florida, Orlando, 32816 Florida, USA}
\email{tamasan@math.ucf.edu}
\DeclareMathSymbol{\Real}{\mathalpha}{AMSb}{"52}
\DeclareMathSymbol{\C}{\mathalpha}{AMSb}{"43}
\DeclareMathSymbol{\Z}{\mathalpha}{AMSb}{'132}
\DeclareMathOperator{\co}{Conv}
\DeclareMathOperator{\Repart}{Re}
\DeclareMathOperator{\Impart}{Im}
\DeclareMathOperator{\Arctan}{Arctan}
\DeclareMathOperator{\Span}{span}
\DeclareMathOperator{\cond}{cond}
\DeclareMathOperator{\pv}{p\text{.}v\text{.}}
\providecommand{\norm}[1]{\left\lVert#1\right\rVert}
\newcommand{\mut}{\mu_\text{t}}
\newcommand{\mua}{\mu_\text{a}}
\newcommand{\mus}{\mu_\text{s}}
\def\sep{\:;\:}
\keywords{
radiative transport, source reconstruction, numerical solution to Cauchy type singular integral equations,
$A$-analytic maps, Hilbert transform, Bukhgeim-Beltrami equation, 
optical molecular imaging
}
\subjclass[2010]{Primary 65N21; Secondary 45E05.}
\begin{document}

\maketitle

\begin{abstract}
We consider an inverse source problem in the stationary radiative transport through an absorbing and scattering medium in two dimensions. Using the angularly resolved 
radiation measured on an arc of the boundary, we propose a numerical algorithm to recover the source in the convex hull of this arc. The method involves an unstable step of 
inverting a bounded operator whose range is not closed. 
We show that the continuity constant of the discretized inverse grows at most linearly with the discretization step, thus stabilizing the problem.
  Numerical examples presented show the effectiveness of the proposed method.
\end{abstract}

\section{Introduction}\label{sec:intro}

Let $\Omega$ be a two-dimensional convex domain with smooth boundary, $\Lambda$ be an arc on its boundary $\partial \Omega$, and $\Omega^{+}:=\co(\Lambda)\cap\Omega$ be its convex hull inside $\Omega$; see Figure~\ref{fig:domain}. In the stationary case, when generated by a source of radiation $q$ embedded in $\Omega$, the density  $I(z,\xi)$ of particles at $z \in \Omega$ moving in the direction $\xi \in S^1$ solves the radiative transport 
problem: for $(z,\xi)\in \Omega\times S^1$,
\begin{gather} 
\xi\cdot\nabla_z I(z,\xi) +\bigl(\mua(z)+\mus(z)\bigr) I(z,\xi) -\mus(z) \int_{S^1} p(z; \xi\cdot\xi')I(z,\xi') d\sigma_{\xi'} =q(z) ,  \label{eq:rte} \\
I(z,\xi) \bigm|_{\Gamma_{-}} = 0, \label{eq:noincoming}
\end{gather}
where  $\mua$ and  $\mus$ are respectively, the absorption and the scattering coefficients, and $p$ is the scattering phase function. 
The latter represents the probability at which particles change direction from $\xi'$ to $\xi$ due to scattering at $z$. In particular $\int_{S^1} p(z;\xi\cdot\xi')\:d\sigma_{\xi'} = 1$, with $d\sigma_{\xi'}$ denoting the arc element on $S^1$.
In \eqref{eq:noincoming} we distinguish the inflow boundary $\displaystyle\Gamma_{-} = \set{ (z,\xi) \in \partial\Omega\times S^1 \sep \nu(z)\cdot\xi < 0},$ 
where $\nu(z)$ is the outer unit normal at $z \in \partial\Omega$. The boundary condition \eqref{eq:noincoming} indicates that no radiation enters the domain from outside $\Omega$.
While in general the source $q$ may be directional dependent, in this work we consider an isotropic but inhomogeneous source.  
In what follows, the attenuation coefficient $\mut = \mua + \mus$ is also used.
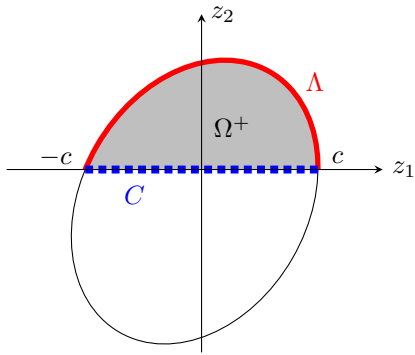
\begin{figure}[h]
\centering
\begin{tikzpicture}[>=stealth]

\draw (0,0) circle [x radius=1.5, y radius=2,rotate=-30];

\begin{scope}
\clip(-3,0.433) rectangle (3,2);
\draw [fill=lightgray,draw=red,line width=2pt] (0,0) circle [x radius=1.5, y radius=2,rotate=-30];
\end{scope}

\node [color=red] at (1.6,1.6) {$\Lambda$};


\draw [rotate=-30,line width=3pt,dotted,color=blue] (-1.4766,-0.35252) -- (1.2032,1.1947);
  \node [color=blue] at (-0.8,0.1) {$C$};
  \node at (-1.85,0.6) {$-c$};
  \node at (1.9,0.6) {$c$};

\node at (0.5,1) {$\Omega^{+}$};

\draw [->] (-2.5,0.433) -- (2.5,0.433) node [right] {$z_1$};
\draw [->] (0.092,-2) -- (0.092,2.5) node [right] {$z_2$};

\end{tikzpicture}
\caption{Exiting radiation is measured only on $\Lambda$.}\label{fig:domain}
\end{figure}

We are concerned with the following inverse problem: 
Reconstruct the unknown internal source $q$ in $\Omega^{+}$ from measurement of the outflow 
$I\lvert_{\Lambda_+}$ on $\Lambda_+$, where
\[
\Lambda_+ = \set{ (z,\xi) \in \Lambda\times S^1 \sep \nu(z)\cdot\xi > 0}.
\]
The medium (as characterized by $\mut$, $\mus$, and $p$)  is assumed known.

It is well known that the classical X-ray tomography turns into the inverse source problem to \eqref{eq:rte} if $\Omega$ is a non-scattering ($\mus=0$) and non-attenuating ($\mu_t=0$) medium \cite{bukhgeimBook,Romanov2017}.
More modern medical imaging techniques such as Positron/Single Photon Emission Tomography (PET/SPECT)  assume $\mu_a>0$ and $\mu_s=0$, whereas molecular imaging methods  \cite{PETKlose,Kim2005,Asllanaj2018} are modeled by the inverse source problem in scattering media ($\mus > 0$).


If the exiting radiation is known on the entire boundary (case $\Lambda=\partial\Omega$),  then $q$ is uniquely determined in $\Omega$ as shown in \cite{stefanovUhlmann} in the Euclidean domains  and \cite{sharafutdinov} on a simple Riemannian sufaces with small curvature. Reconstructions methods have also been proposed in \cite{balTamasan07}  in weakly scattering media and, for non-weakly scattering media in \cite{FujiwaraSadiqTamasan,FujiwaraSadiqTamasanSIIMS, eggerSchlottbom}, see also
\cite{smirnovKlibanovNguyen19} for slab domains and data on both sides of the boundary. 

For partial data, the singular support of the source (qualitative imaging)  can be recovered in a specific subdomain \cite{hubenthal}. Quantitative determination of the source 
$q\lvert_{\Omega^+}$ from data  on an arc of the boundary has been recently established by the authors in \cite{FujiwaraSadiqTamasanPartialIP}. The theoretical reconstruction method (summarized in Section \ref{sec:prelim}) is based on Bukhgeim's  theory of $A$-analytic functions \cite{bukhgeimBook}.

In this work we present and analyze an effective numerical algorithm, which reconstructs $q$ in ${\Omega^+}$ from exiting radiation on $\Lambda$. We know of no other method for quantitative imaging of a radiative source in stationary radiative transport, where data is collected on one side of the boundary.

Key to the method proposed in \cite{FujiwaraSadiqTamasanPartialIP} is the uniqueness of solution to the Cauchy type singular integral equation (CSIE)
\begin{equation}\label{CSIE}
[I-bH_c]f(x)=\Phi(x),\quad -c<x<c,\end{equation}
where $I$ is the identity operator, $b$ is a complex parameter, and 
\begin{equation}\label{eq:finiteHilbertTransform}
H_{c}[f](x) = \frac{1}{\pi}\pv\int_{-c}^{c} \frac{f(y)}{x-y}\:dy
\end{equation}
is the finite Hilbert transform of functions on $(-c,c)$.

Motivated by application to airfoil in aerodynamics and fracture mechanics in elasticity, numerous literature has studied CSIEs ~\cite{Muskhelishvili, Tricomi, Gakhov,ErdoganGupta,Ioakimidis1984,Goldberg1990}
for the case $b\neq i$. However, our problem  leads to consider \eqref{CSIE} for $b=i$. 
It is known  \cite{KoppelmanPincus1959,OkadaElliott1991} that the spectrum of $iH_c$ on $L^2(-c,c)$ is the interval $[-1,1]$. Fortunately, $1$ is not in the point spectrum of $iH_c$~\cite{widom1960,FujiwaraSadiqTamasanIPI}, thus allowing to invert $(I-iH_{c})$ in its range. 
Unfortunately, the range of $(I-iH_{c})$ is a dense proper subset of $L^2(-c,c)$ yielding an unstable inversion with the discretization schemes rendered ill-conditioned.
The spectrum analysis by itself does not shed light on how severe this ill-posedness can be. In Section  \ref{sec:CSIE}  we analyze this ill-posedness in the case of the piecewise constant approximation and estimate a degree of ill-conditionedness of our discretization scheme.


In Section \ref{sec:Alg} we formulate the numerical algorithm, which is implemented in the numerical experiments in Section \ref{sec:numstudy}.

\section{Preliminaries}\label{sec:prelim}

This section provides a brief review of the theoretical background used in our numerical reconstruction. For details we refer to \cite{FujiwaraSadiqTamasanIPI, FujiwaraSadiqTamasanPartialIP}.

We use the identification of spatial points $z = (z_1, z_2) \in \Real^2$ with their complex representation $z=z_1+iz_2 \in \C$,
and identify velocities $\xi \in S^1$ with $\theta \in \Real/2\pi$ by the standard polar coordinate $\xi = (\cos\theta,\sin\theta)$.
Derivatives
$\partial = (\partial_{z_1} - i\partial_{z_2})/2$ and
$\overline{\partial} = (\partial_{z_1} + i\partial_{z_2})/2$,
convert the advection operator to $\xi \cdot\nabla_z = e^{-i\theta}\overline{\partial} + e^{i\theta}\partial$.

The exact solution $I$ to the problem \eqref{eq:rte} and \eqref{eq:noincoming} has the Fourier expansion
with respect to $\xi$ in $L^2(S^1)$,
\[
I\bigl(z,\xi(\theta)\bigr) = \sum_{m \in \Z} I_{m}(z) e^{im\theta}.
\]
Our boundary data is equivalent to knowledge of the sequence 
$( I_{m}|_{\Lambda} \sep m \in \Z \bigr)$ on $\Lambda$.
Suppose that there exists a positive integer $M$ such that
$p$ is sufficiently well approximated by its  Fourier polynomial in the angular variable,
\begin{equation}\label{p=trig-poly}
p(z; \xi\cdot\xi') \approx \sum_{|m| \leq M} p_m(z) e^{im\theta},
\end{equation}
where 
$\theta =\arccos (\xi\cdot\xi')$. 

Since $q$ is isotropic, and $I$ is real-valued,
the radiative transport equation \eqref{eq:rte} decomposes into the infinite elliptic system in $\Omega$:
\begin{alignat}{3}
&\overline{\partial} I_{1} + \partial I_{-1} + \mut I_0 = 2\pi \mus p_0 I_0 + q, &\qquad& \label{eq:system:1}\\
&\overline{\partial} I_{-m} + \partial I_{-m-2} + \mut I_{-m-1} = 2\pi \mus p_{-m-1} I_{-m-1}, &\qquad& 0 \leq m \leq M-1, \label{eq:system:2}\\
&\overline{\partial} I_{-m} + \partial I_{-m-2} + \mut I_{-m-1} = 0, &\qquad& m \ge M,\label{eq:system:3}
\end{alignat}
Note that \eqref{eq:system:3} holds approximately, and it becomes exact for scattering kernels satisfying \eqref{p=trig-poly} with equality.

Let
\[
h[\mut](z,\xi) = D[\mut](z,\xi) - \dfrac{1}{2}(I-iH)R[\mut](z\cdot\xi^{\perp},\xi^{\perp}),
\]
where $\xi^\perp$ is the counter-clockwise rotation of $\xi$ by $\pi/2$,
$D$ is the divergent beam transform
\[
D[\mut](z,\xi) = \int_0^\infty \mut(z+t\xi)\:dt,
\]
$R$ is the Radon transform in $\Real^2$
\[
R[\mut](s,\xi) = \int_{\Real}\mut(s\xi+t\xi^{\perp})\:dt,
\]
and $H$ is the Hilbert transform
\[
H[f](s) = \dfrac{1}{\pi}\pv\int_{\Real}\dfrac{f(t)}{s-t}\:dt.
\]
The operator $h$ admits the Fourier expansions
\[
e^{-h[\mut](z,\xi(\theta))} = \displaystyle\sum_{m=0}^\infty \alpha_m(z) e^{im\theta},
\quad (z,\xi) \in \overline{\Omega}\times S^1,
\]
and
\[
e^{+h[\mut](z,\xi(\theta))} = \displaystyle\sum_{m=0}^\infty \beta_m(z) e^{im\theta},
\quad (z,\xi) \in \overline{\Omega}\times S^1.
\]
If we define
\begin{equation}\label{Jays}
J_{-m}(z) = \sum_{j=0}^\infty \alpha_j(z) I_{-m-j}(z),
\quad z \in \overline{\Omega}, m \ge 0,
\end{equation}
then its inversion is given by
$\displaystyle
I_{-m}(z) = \sum_{j=0}^\infty \beta_j(z) J_{-m-j}(z),
\quad z \in \overline{\Omega}, m \ge 0.
$

The sequence $\mathbf{J} = (J_{-m}|_{\Omega} \sep m \ge M)$ in \eqref{Jays} solves 
\begin{equation}\label{eq:L2analytic}
\overline{\partial} \mathbf{J} + \partial \mathcal{L}^2 \mathbf{J} = 0, \quad \text{in $\Omega$},
\end{equation}
where $\mathcal{L}$ denotes the left shift operator $\mathcal{L}(J_{-M}, J_{-M-1}, \dotsc) = (J_{-M-1}, J_{M-2}, \dotsc),$
as shown in \cite{sadiqTamasan01,SadiqScherzerTamasan}.

Solutions to \eqref{eq:L2analytic} are said to be $\mathcal{L}^2$-analytic in the sense of Bukhgeim~\cite{bukhgeimBook}. An important property of $\mathcal{L}^2$-analytic sequences is that they obey a Cauchy-like integral formula. 
For any simple  piecewise-smooth closed curve $\gamma\subset\overline{\Omega}$, the values of $\mathbf{J}$ at any point enclosed by $\gamma$ is determined by the values of  $\mathbf{J}$ on $\gamma$. More precisely, for $z$ enclosed 
by $\gamma$,
\begin{multline}\label{eq:CauchyBukhgeimInt}
J_{-m}(z)
= \dfrac{1}{2\pi i}\int_{\gamma} \dfrac{J_{-m}(\zeta)}{\zeta - z}d\zeta
\\
+ \dfrac{1}{2\pi i}\int_{\gamma} \left(\dfrac{d\zeta}{\zeta-z}-\dfrac{d\overline{\zeta}}{\overline{\zeta}-\overline{z}}\right)
\left\{\sum_{j=1}^\infty J_{-m-2j}(\zeta)\left(\dfrac{\overline{\zeta}-\overline{z}}{\zeta-z}\right)^j \right\}\:d\zeta,
\quad m \ge M.
\end{multline}

If we were to measure the outflow $I$ on the entire boundary $\partial\Omega$, then \eqref{Jays} would yield $(J_{-m}|_{\partial\Omega} \sep m \ge M)$ on the boundary, and \eqref{eq:CauchyBukhgeimInt} would give
$(J_{-m}|_{\Omega} \sep m \ge M)$ in $\Omega$, and then via \eqref{eq:system:1} the source $q$ would be obtained in $\Omega$.  This was the strategy used  in \cite{FujiwaraSadiqTamasan,FujiwaraSadiqTamasanSIIMS}. However, in our problem  $I$ is only known on the open arc $\Lambda$; see Figure \ref{fig:domain}.











Upon a rotation and translation of the domain, we may assume that the chord $C$ joining the endpoints of $\Lambda$ is the interval $(-c,c)$ on the real axis, for some $c>0$. We may also assume that $\Lambda$ lies in the upper half plane $\set{\text{Im}(z) > 0}$. 

Using \eqref{eq:CauchyBukhgeimInt} on $\Lambda \cup C$ and
taking the limit $\Omega^{+}\ni z\to x \in C$,
the second term in the right hand side
vanishes by virtue of continuity~\cite{sadiqTamasan01}.
Hence the Sokhotski-Plemelj formula yields the CSIE
\begin{equation}\label{eq:inteq:1}
(I-iH_{c})J_{-m}(x) = 2P^{-}[J_{-m}|_\Lambda](x), \quad x \in C,
\end{equation}
where
\begin{multline*}
P^{-}\bigl[J_{-m}|_{\Lambda}\bigr](z)
= \dfrac{i}{2\pi}\int_{\Lambda} \dfrac{J_{-m}(\zeta)}{z-\zeta}d\zeta
\\
- \dfrac{i}{2\pi}\int_{\Lambda} \left(\dfrac{d\zeta}{\zeta-z}-\dfrac{d\overline{\zeta}}{\ \overline{\zeta}-\overline{z}\ }\right)
  \sum_{j=1}^\infty J_{-m-2j}(\zeta)\left(\dfrac{\ \overline{\zeta}-\overline{z}\ }{\zeta-z}\right)^j,
\quad z \in \overline{\Omega^{+}}, m \ge M.
\end{multline*}

By solving \eqref{eq:inteq:1}, we determine the sequence $(J_{-m}|_{C} \sep m \ge M)$ on the chord $C$. Together with the data on $\Lambda$, the sequence $(J_{-m}| \sep m \ge M)$ is now known on $\partial\Omega_+$ and an application
of \eqref{eq:CauchyBukhgeimInt} yields its values in $\Omega_+$:
\begin{multline*}
J_{-m}(z)
= \dfrac{1}{2\pi i}\int_{-c}^{c} \dfrac{J_{-m}(t)}{t - z}dt
\\
+ \dfrac{1}{2\pi i}\int_{-c}^{c} \left(\dfrac{1}{t-z}-\dfrac{1}{t-\overline{z}}\right)
\left\{\sum_{j=1}^\infty J_{-m-2j}(t)\left(\dfrac{t-\overline{z}}{t-z}\right)^j \right\}\:dt
+ P^{-}\bigl[J_m|_{\Lambda}\bigr](z), \\
z \in \Omega^{+}, m \ge M.
\end{multline*}

The system \eqref{eq:system:2} yields a system of elliptic
boundary value problems for $I_{-m}$, $0 \leq m \leq M-1$, which can be solved iteratively in decreasing order of $m$ starting with $m=M-1$, and ending with $m=0$. More precisely, we obtain
\[
\begin{cases}
\overline{\partial} I_{-m} = f_{-m}, \quad \text{in $\Omega^+$, $0 \leq m \leq M-1$};\\
I_m|_{\Lambda} : \text{given},
\end{cases}
\]
where $f_{-m} = -\partial I_{-m-2} + (2\pi \mu_s p_{-m-1}-\mut)I_{-m-1}$. The Cauchy-Pompeiu formula~\cite{vekua} yields
\begin{equation}\label{eq:CauchyPompeiu}
I_{-m}(z)
= \dfrac{1}{2\pi i}\int_{\partial\Omega^{+}} \dfrac{I_{-m}(\zeta)}{\zeta-z}\:d\zeta
- \dfrac{1}{\pi}\int_{\Omega^{+}}\dfrac{f_{-m}(\xi+i\eta)}{(\xi+i\eta)-z}\:d\xi\:d\eta.
\end{equation}
Again, by taking the limit $\Omega^{+}\ni z \to x \in C$ and using the
Sokhotski-Plemelj formula, we obtain the CSIE
\begin{equation}\label{eq:inteq:2}
(I-iH_{c})I_{-m}(x)
= -\dfrac{2}{\pi}\int_{\Omega^+}\dfrac{f_{-m}(\xi+i\eta)\:d\xi\:d\eta}{(\xi-x)+i\eta}
+ \dfrac{1}{\pi i}\int_{\Lambda}\dfrac{I_{-m}(\zeta)}{\zeta-x}d\zeta, \\
\quad x \in C.
\end{equation}
This is again an equation like \eqref{eq:inteq:1} with a different right hand side.
The solution to the equation \eqref{eq:inteq:2} determines $I_{-m}$ on the chord $C$.  

Since the trace of $I_{-m}\lvert_{\Lambda}$ on $\Lambda$ is given for $0\leq m\leq M$,  we
have determined the trace  of $I_{-m}$ on the entire boundary of $\Omega_+$. The values of $I_{-m}$ in ${\Omega^{+}}$ can now be obtained 
by the Cauchy-Pompeiu formula \eqref{eq:CauchyPompeiu}:
\[
I_{-m}(z)
= -\dfrac{1}{\pi}\int_{\Omega^+}\dfrac{f_{-m}(\xi+i\eta)\:d\xi\:d\eta}{(\xi-z)+i\eta}
+ \dfrac{1}{2\pi i}\int_{\Lambda}\dfrac{I_{-m}(\zeta)}{\zeta-z}d\zeta
+ \dfrac{1}{2\pi i}\int_{-c}^{c} \dfrac{I_{-m}(x)}{x-z}\:dx.
\]

With $I_0$ and $I_{-1}$ now computed in $\Omega_+$, the source $q$ is determined via \eqref{eq:system:1}.

\section{On the numerical stability in a Singular Integral Equations of Cauchy type}\label{sec:CSIE}

A key step in our reconstruction procedure requires solving the Cauchy type singular integral equations (CSIE)  \eqref{eq:inteq:1} and \eqref{eq:inteq:2}, both of which are of the type 

\begin{equation}\label{eq:inteq}
\phi(x) - \dfrac{i}{\pi}\pv\int_{-c}^{c}\dfrac{\phi(y)}{x-y}dy = \Phi(x), \quad x \in (-c,c).
\end{equation}


Since the spectrum of $iH_c$ with the finite Hilbert transform $H_c$ in \eqref{eq:finiteHilbertTransform}
is $[-1,1]$, the operator $\bigl[(1+\epsilon)I-iH_{c}\bigr]^{-1}$ is bounded on $L^2(-c,c)$ for any $\epsilon>0$. This remark leads to the regularization of \eqref{eq:inteq} via solutions to 

\begin{equation}\label{eq:inteq:reg}
(1+\epsilon)\phi_\epsilon(x) - \dfrac{i}{\pi}\pv\int_{-c}^{c}\dfrac{\phi_\epsilon(y)}{x-y}dy = \Phi(x), \quad x \in (-c,c).
\end{equation}

One could approach solving \eqref{eq:inteq:reg} by considering solutions to 

\begin{equation}\label{eq:inteq:real}
(1+\epsilon)\phi_\epsilon(x) - \dfrac{b}{\pi}\pv\int_{-c}^{c}\dfrac{\phi_\epsilon(y)}{x-y}dy = \Phi(x), \quad x \in (-c,c),
\end{equation}and then letting $b\to i$. 

If $b$ is a real number, it is well known  \cite{Gakhov, Tricomi, Muskhelishvili}
that,  for any $\Phi \in L^2(-c,c)$ and $\epsilon > 0$, the equation \eqref{eq:inteq:real} has the unique solution given by
\begin{equation}\label{breal}
\phi_\epsilon(x) = \dfrac{(1+\epsilon) \Phi(x)}{(1+\epsilon)^2 + b^2}
- \dfrac{b}{\pi}\dfrac{e^{\tau(x)}}{(1+\epsilon)^2 + b^2}\int_{-c}^{c} \dfrac{e^{-\tau(y)}\Phi(y)}{y-x}dy,
\quad x \in (-c,c),
\end{equation}
where
\[
\tau(x) = -\dfrac{1}{\pi}\Arctan\dfrac{b}{1+\epsilon}\log\dfrac{c-x}{c+x}.
\]

The solution \eqref{breal} depends continuously on the parameter $b$ as long as  $(1+\epsilon)^2 + b^2 \neq 0$. 
For $\epsilon>0$ fixed, since $(1+\epsilon)^2 + b^2 \neq 0$ for $b$ along the segment  joining $0$ and $i$, the solution to
 \eqref{eq:inteq:reg} is obtained by setting  $b=i$ in \eqref{breal}. In the numerical evaluation we are then led to calculate
\begin{equation}\label{eq:numoscillation}
e^{\tau(x)}
= \cos\left(c_n \log\dfrac{c+x}{c-x}\right)
 + i\sin\left(c_n \log\dfrac{c+x}{c-x}\right),
\end{equation}
where
\[
c_n = \dfrac{1}{2\pi}\log\dfrac{2+\epsilon}{\epsilon} - n, \quad n \in \Z.
\]
However, infinite oscillations near the end points $x=\pm c$
make the numerical treatment of $e^{\tau}$ difficult. While one could try some particular numerical integration scheme, e.g., as in  \cite{DEformula},  
choosing optimal truncation parameters is non-trivial.

It is also known \cite{ErdoganGupta} that
the solution to \eqref{eq:inteq:real} can be approximated by
\[
\phi_\epsilon(x) \approx \sum_{n} C_n w(x/c) P_n^{(\mu,\nu)}(x/c),
\]
where
\[
\mu = -\nu = \dfrac{1}{2\pi i}\log\dfrac{2+\epsilon}{\epsilon}, \qquad
w(t) = (1-t)^\mu (1+t)^\nu,
\]
and $P^{(\mu,\nu)}_n(t)$ is the Jacobi polynomial.
In this algorithm the coefficients $C_n$ are given by
\[
C_n = -\dfrac{1}{b}\dfrac{\sin \pi\mu}{\theta^{(\mu,\nu)}_n}\int_{-1}^1
    P^{(-\mu,-\nu)}_n(t)w^{-1}(t)\Phi(ct)\:dt,
\]
with
\[
\theta^{(\mu,\nu)}_n
= \dfrac{2^{\mu+\nu+1}}{2n+\mu+\nu+1}
\dfrac{\Gamma(n+\mu+1)\Gamma(n+\nu+1)}{n!\:\Gamma(n+\mu+\nu+1)}.
\]
Unfortunately, due to the presence of the $1/w$ term in the integrand, the numerical integration in $C_n$ is as inefficient
as using the solution formula \eqref{eq:numoscillation}.

To circumvent these inefficiencies we propose to solve \eqref{eq:inteq:reg} via a  Galerkin approximation scheme.

Let $N$ be a positive integer,
$I_m = \bigl(-c+(m-1)\Delta x, -c+m\Delta x\bigr)$ denote equi-spaced intervals on $(-c,c)$ with $\Delta x = 2c/N$,
and $x_m$ be the mid-point of $I_m$.
Let $\chi_m(x)$ be the characteristic of $I_m$.
The approximation
\[
\phi_{\epsilon,\Delta} = \sum_{k=1}^N \phi_{\epsilon,k}\chi_k(x)
\]
to $\phi_\epsilon$  in $L^2(-c,c)$ by elements in $\Span\set{ \chi_m \sep 1 \leq m \leq N}$ reduces  \eqref{eq:inteq:reg}  to the semi-discrete equation
\begin{multline}\label{semidiscrete}
(1+\epsilon)\phi_{\epsilon,m}\Delta x
 - \dfrac{i}{\pi}\sum_{k=1}^N \phi_{\epsilon,k}
    \int_{-c}^c \left( \pv\int_{-c}^{c}\dfrac{\chi_k(y)\:dy}{x-y}\right)\chi_m(x)\:dx\\
 \approx \int_{-c}^{c}\Phi(x)\chi_m(x)\:dx, 
\quad 1 \leq m \leq N.
\end{multline}
Furthermore, by adopting the mid-point rule,
\begin{align*}
\int_{-c}^{c}\left( \pv\int_{-c}^{c}\dfrac{\chi_k(y)\:dy}{x-y}\right)\chi_m(x)\:dx
&\approx \pv\int_{-c}^{c}\dfrac{\chi_k(y)\:dy}{x_m-y}\Delta x
\\
&\approx \left(\int_{-c}^{x_m-\Delta x/2} + \int_{x_m+\Delta x/2}^{c}\right)\dfrac{\chi_k(y)\:dy}{x_m-y}\Delta x
\\
&= \begin{cases}
0, & \quad k = m;\\
\Delta x\displaystyle\int_{I_k}\dfrac{dy}{x_m-y} \approx \dfrac{\Delta x^2}{x_m - x_k}, &\quad k \neq m,
\end{cases}
\end{align*}
the equation \eqref{semidiscrete} reduces to the linear system
\begin{equation}\label{eq:inteq:d}
(1+\epsilon)\phi_{\epsilon,m} - \dfrac{i}{\pi} \sum_{k\neq m} \dfrac{\phi_{\epsilon,k}}{m-k} = \Phi_m,
\quad 1 \leq m \leq N,
\end{equation}
where
\[
\Phi_m = \dfrac{1}{\Delta x}\int_{I_m}\Phi(x)\:dx,
\quad\text{or}\quad \Phi(x_m).
\]
Note that \eqref{eq:inteq:d} could also be interpreted as the
discretization by the collocation method and the composite mid-point rule,
where $\phi_{\epsilon,m}$ would correspond to $\phi_\epsilon(x_m)$. 

Let $H_N$ be a square matrix of order $N$ whose $(m,k)$-entry is
\[
h_{mk} = \begin{cases}
\dfrac{1}{\pi}\dfrac{1}{m-k}, \quad & m\neq k;\\
0, \quad & m=k.
\end{cases}
\]
The norm of  $H_N$ in $\ell^2$ norm as a linear transformation on $\C^N$ is
known from Montgomery-Matthews' inequality~\cite{PhDMatthews, Matthews, Jameson} to be estimated by 
\begin{align}\label{MMinequality}
\norm{H_N}_2 \leq 1-\dfrac{1}{N}.
\end{align}
As a direct corollary, our proposed numerical procedure satisfies the following properties.

\begin{lemma}
For any $\epsilon \ge 0$ and
any positive integer $N$,
$(1+\epsilon)E_N-iH_N$ is strictly positive definite on $\C^N$,
where $E_N$ is the identity matrix of order $N$.
\end{lemma}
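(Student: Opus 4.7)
The plan is to exploit two facts: that $iH_N$ is Hermitian (so $(1+\epsilon)E_N - iH_N$ is Hermitian and has a real quadratic form), and that the operator norm of $H_N$ is strictly less than $1$ by the Montgomery-Matthews inequality cited just above the lemma.

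First I would observe that the matrix $H_N$ has real entries and is skew-symmetric, since $h_{mk} = \frac{1}{\pi(m-k)} = -h_{km}$ and $h_{mm}=0$. Consequently $H_N^* = H_N^T = -H_N$, so $(iH_N)^* = -iH_N^* = iH_N$, i.e.\ $iH_N$ is Hermitian. Therefore $(1+\epsilon)E_N - iH_N$ is Hermitian, and in particular $\langle[(1+\epsilon)E_N - iH_N]x, x\rangle$ is real for every $x \in \C^N$.

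Next, for any nonzero $x \in \C^N$ I would estimate
\begin{equation*}
\langle[(1+\epsilon)E_N - iH_N]x, x\rangle
= (1+\epsilon)\norm{x}_2^2 - \langle iH_N x, x\rangle.
\end{equation*}
Since $iH_N$ is Hermitian, $|\langle iH_N x, x\rangle| \leq \norm{iH_N}_2 \norm{x}_2^2 = \norm{H_N}_2\norm{x}_2^2$, and applying the Montgomery-Matthews estimate \eqref{MMinequality} gives
\begin{equation*}
\langle[(1+\epsilon)E_N - iH_N]x, x\rangle
\geq (1+\epsilon)\norm{x}_2^2 - \left(1-\tfrac{1}{N}\right)\norm{x}_2^2
= \left(\epsilon + \tfrac{1}{N}\right)\norm{x}_2^2 > 0.
\end{equation*}
This yields strict positive definiteness (and in fact an explicit lower bound $\epsilon + 1/N$ on the smallest eigenvalue, which is the quantitative information the next section will need).

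There is no real obstacle here once the two ingredients are in place; the only subtlety worth flagging is the sign/conjugation calculation showing $iH_N$ is Hermitian (rather than skew-Hermitian), which is what makes the quadratic form real and allows the norm bound to be converted into a lower bound on $\langle[(1+\epsilon)E_N - iH_N]x, x\rangle$.
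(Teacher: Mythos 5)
Your proof is correct and uses the same two ingredients as the paper: the Hermitian structure of $iH_N$ (which the paper asserts and you helpfully verify via the real skew-symmetry of $H_N$) and the Montgomery--Matthews bound $\norm{H_N}_2 \leq 1 - 1/N$. The only cosmetic difference is that you bound the quadratic form directly via Cauchy--Schwarz, whereas the paper runs the equivalent argument on the eigenvalues, arriving at the same lower bound $\epsilon + 1/N$.
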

\begin{proof}
Noting that $(1+\epsilon)E_N-iH_N$ is Hermitian,
suppose that $\lambda \in \Real$ satisfies 
$\bigl[(1+\epsilon)E_N-iH_N\bigr]x = \lambda x$
with some $x \in \C^N\setminus\set{0}$.
Then, $iH_Nx = (1+\epsilon-\lambda)x$ and by the Montgomery-Matthews' inequality \eqref{MMinequality},
\[
|1+\epsilon-\lambda| \leq \norm{H_N}_2 \leq 1-\dfrac{1}{N}.
\]
Therefore
\[
0 < \epsilon + \dfrac{1}{N} \leq \lambda \leq 2+\epsilon-\dfrac{1}{N}.\qedhere
\]
\end{proof}

This directly guarantees that the algorithm proposed in the next section does not break down.

\begin{theorem}
For any $\epsilon \ge 0$, positive integer $N$,
and  $\begin{pmatrix} \Phi_m \end{pmatrix} \in \C^N$,
there exists a unique solution $\begin{pmatrix} \phi_{\epsilon,m} \end{pmatrix} \in \C^N$ to \eqref{eq:inteq:d}.
\end{theorem}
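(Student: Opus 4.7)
My plan is to reduce the theorem directly to the preceding lemma. The linear system \eqref{eq:inteq:d} can be rewritten in matrix form as
\[
\bigl[(1+\epsilon)E_N - iH_N\bigr]\begin{pmatrix}\phi_{\epsilon,1}\\ \vdots\\ \phi_{\epsilon,N}\end{pmatrix} = \begin{pmatrix}\Phi_1\\ \vdots\\ \Phi_N\end{pmatrix},
\]
so the claim is just that this matrix is invertible on $\C^N$. The lemma preceding the theorem already establishes that $(1+\epsilon)E_N - iH_N$ is strictly positive definite.

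The argument therefore reduces to the standard linear-algebra observation that a strictly positive definite Hermitian matrix has all eigenvalues strictly positive, in particular zero is not an eigenvalue, hence the matrix is nonsingular. First I would note that invertibility on a finite-dimensional space is equivalent to injectivity, which in turn is equivalent to the absence of $0$ in the spectrum. The lemma gives the eigenvalue bound $\lambda \ge \epsilon + 1/N > 0$ for every eigenvalue $\lambda$, so $0$ is indeed excluded. Existence and uniqueness of the solution $(\phi_{\epsilon,m}) \in \C^N$ for an arbitrary right-hand side $(\Phi_m) \in \C^N$ follow immediately.

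No step in this proof is a real obstacle; the entire content of the theorem is packaged in the Montgomery--Matthews bound \eqref{MMinequality} which already powered the lemma. The only minor care is in pointing out that the Hermitian-ness of $(1+\epsilon)E_N - iH_N$ (since $H_N$ is real skew-symmetric) together with the uniform eigenvalue bound furnished by the lemma implies both existence and uniqueness at once, so the theorem is truly a one-line corollary of what has just been proved.
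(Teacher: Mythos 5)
Your proposal is correct and matches the paper's intent exactly: the paper presents the theorem as an immediate corollary of the preceding lemma, whose strict positive definiteness (hence nonsingularity) of $(1+\epsilon)E_N - iH_N$ gives existence and uniqueness for the system \eqref{eq:inteq:d}. Your added remark that $H_N$ is real skew-symmetric, so that $iH_N$ is Hermitian, is the same observation the lemma's proof relies on.
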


If $\epsilon = 0$, then
the condition number in $2$-norm is estimated as
\[
\cond_2\bigl(E_N - iH_N\bigr) \leq 2N-1,
\]
and it grows at most linearly with respect to $N$.
Furthermore for any $\epsilon > 0$,
\[
\cond_2\Bigl[(1+\epsilon)E_N - iH_N\Bigr]
\leq \dfrac{2+\epsilon - \tfrac{1}{N}}{\epsilon + \tfrac{1}{N}}
\leq \dfrac{2+\epsilon}{\epsilon}
\]
and, thus, is uniformly bounded with respect to $N$.

One may derive more accurate discretization to solve \eqref{eq:inteq:reg}
by choosing proper collocation points~\cite{ErdoganGupta} that 
adapt to the singularity of the solution at the end points,
or more accurate quadrature methods.
For example, 
if $\phi \in C^2$ in a neighborhood of  the chord $C$,
then the principal value integral can be evaluated as in \cite{HilbertTransform,FujiwaraSadiqTamasanSIIMS}, and then \eqref{eq:inteq:reg}
becomes
\[
(1+\epsilon)\phi_\epsilon(x) - \dfrac{i}{\pi}
\left(-\int_{-c}^c \psi(x,y)\:dy + \phi_\epsilon(x)\log\dfrac{c+x}{c-x}\right) = \Phi(x),
\]
with
\[
\psi(x,y) = \begin{cases}
\dfrac{\phi_\epsilon(x) - \phi_\epsilon(y)}{x-y}, \quad & x \neq y;\\
\phi_\epsilon'(x), \quad & x = y.
\end{cases}
\]Note that the integral on the left hand side is now in the sense of Riemann.
The composite mid-point rule and second-order approximations to $\phi_\epsilon'$
yields the linear system
\begin{multline}\label{eq:inteq:d2}
\left(1+\epsilon - \dfrac{i}{\pi}\log\dfrac{c+x_m}{c-x_m}
+ \dfrac{i}{\pi}\sum_{k\neq m}\dfrac{1}{m-k}\right) \phi_{\epsilon,m}
-  \dfrac{i}{\pi}\sum_{k\neq m}\dfrac{\phi_{\epsilon,k}}{m - k}
\\
+
\dfrac{i}{\pi}
\begin{Bmatrix}
-\tfrac{1}{2}\phi_{\epsilon,3} + 2\phi_{\epsilon,2} - \tfrac{3}{2}\phi_{\epsilon,1} \quad & m = 1\\[1ex]
\tfrac{1}{2}\phi_{\epsilon,m+1} - \tfrac{1}{2}\phi_{\epsilon,m-1} \quad & 1 < m < N\\[1ex]
\tfrac{1}{2}\phi_{\epsilon,N-2} - 2\phi_{\epsilon,N-1} + \tfrac{3}{2}\phi_{\epsilon,N} \quad & m = N
\end{Bmatrix}
 = \Phi_m, \quad 1 \leq m \leq N.
\end{multline}
\begin{figure}[h]
\centering
\includegraphics[width=.8\textwidth]{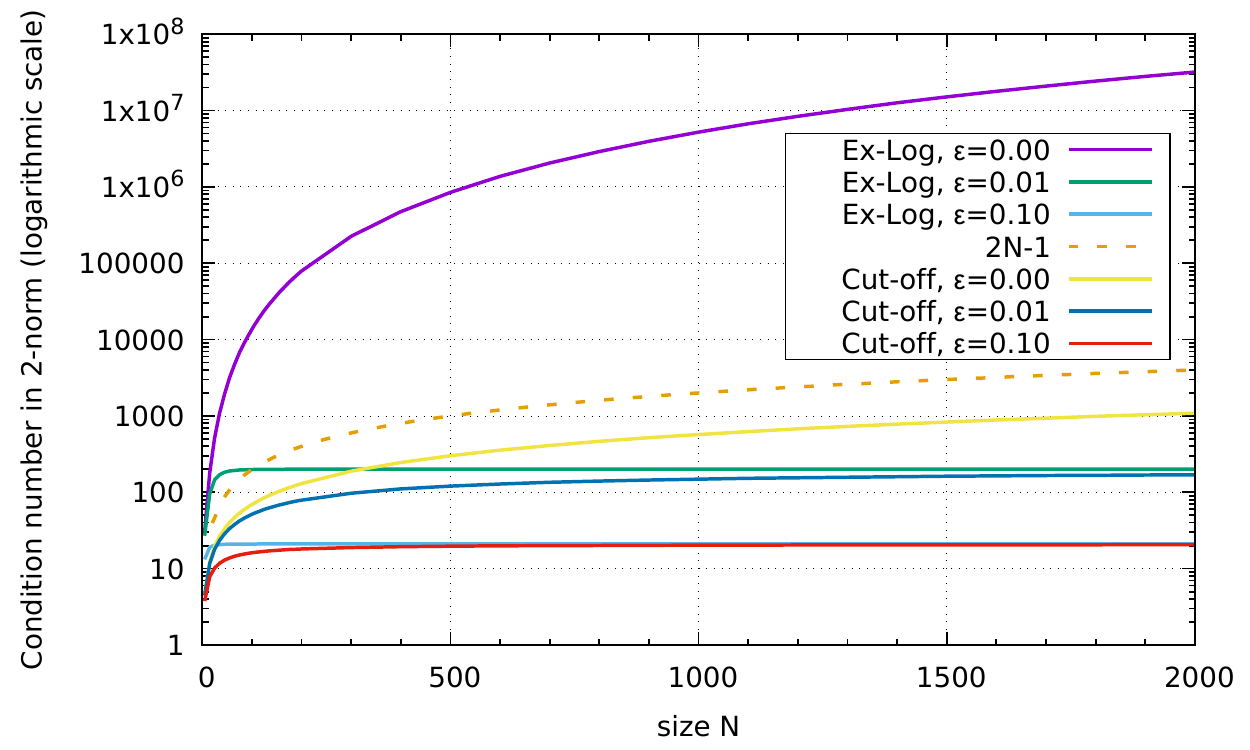}
\caption{\label{fig:cond}Condition numbers in $2$-norm
to linear systems by the cut-off method \eqref{eq:inteq:d} and the ex-log method \eqref{eq:inteq:d2}}
\end{figure}

To differentiate between the numerical method that uses  \eqref{eq:inteq:d} from the one that uses \eqref{eq:inteq:d2} in the numerical solvability of the singular integral equation \eqref{eq:inteq:reg}, we call the former
the \emph{cut-off method} and the  latter the \emph{extracting logarithmic singularity (ex-log) method}.


The condition numbers of \eqref{eq:inteq:d} and \eqref{eq:inteq:d2} for $\epsilon = 0$, $0.01$, and $0.1$
obtained numerically are depicted in Figure~\ref{fig:cond}
where the horizontal axis is the size of the matrix $N$ and
the vertical axis is the condition number in the logarithmic scale.
These results indicate that 
\eqref{eq:inteq:d} is numerically more stable than \eqref{eq:inteq:d2},
and the instability of \eqref{eq:inteq:d} with $\epsilon=0$ is not serious.

\section{Reconstruction Algorithm}\label{sec:Alg}
In this section, we present the numerical algorithm for the source reconstruction.

Recall that the absorbing and scattering medium (as characterized by $\mua$, $\mus$ in $\Omega$, and $p$ in $\Omega^{+}$) is known.
Assume that $\Lambda$ has a smooth parameterization $\zeta(\omega)$
for $\omega^{-} \leq \omega \leq \omega^{+}$, with $\zeta(\omega^{\mp}) = \pm c$.
Outflow through $\Lambda$, $I_{\text{measured}}(\zeta,\xi)$, is
sampled at $\bigl(\zeta_k, \xi(\theta_t)\bigr) \in \Gamma_{+}$
with $\zeta_k\in\Lambda$, where
$\zeta_1$, $\dotsc$, $\zeta_{K}$ are $K$ distinct points on $\Lambda$,
and $\theta_t = 2\pi t/T$ for a positive integer $T$.
We assume that $\zeta_k = \zeta(\tilde{\omega}_k)$ and
$\omega^{-} < \tilde{\omega}_1 < \tilde{\omega}_2 < \dotsb < \tilde{\omega}_{K} < \omega^{+}$.

\begin{step}
Let $\omega_k = (\tilde{\omega}_k + \tilde{\omega}_{k+1})/2$ for $1 \leq k \leq K-1$,
with $\omega_0 = \omega^{-}$ and $\omega_K = \omega^{+}$.
Write $\Delta\omega_k = \omega_{k+1} - \omega_{k}$ and
$\zeta_k' = \zeta'(\tilde{\omega}_k)$ for $1 \leq k \leq K$.
This leads an approximation by the composite mid-point rule
\[
\int_{\Lambda} f(\zeta)\:d\zeta
\approx \sum_{k=1}^K f(\zeta_k) \zeta_k' \Delta\omega_k.
\]
\end{step}

\begin{step}\label{step:omega}
We introduce an inscribed polygonal domain $\Omega^{+}_{\Delta} \approx \Omega^{+}$
whose closure includes $C$ and
take a triangulation $\mathcal{T} = \set{\tau_{\ell}}$ of $\Omega^{+}_{\Delta}$,
i.e.\ each $\tau_{\ell}$ is a triangular domain, $\tau_{\ell}\cap\tau_k = \emptyset$ if $\ell\neq k$,
and $\overline{\Omega^{+}_\Delta} = \displaystyle\bigcup_{\ell}\overline{\tau_{\ell}}$.
Let $P_1(\mathcal{T})$ denote the set of the piecewise linear continuous functions
with respect to $\mathcal{T}$.
We denote by $V$ the set of vertices of $\mathcal{T}$,
and by $r_\ell$ the centroid of $\tau_\ell$.
\end{step}

\begin{step}
Fix positive integers $N$, $M$ and $S$.
On the chord $C$, we allocate nodes $x_n = -c+(n-1/2)\Delta x$, $1 \leq n \leq N$
with $\Delta x = 2c/N$.
These are involved in the composite mid-point rule on $C$ as
\[
\int_C f(x)\:dx \approx \sum_{n=1}^N f(x_n)\Delta x.
\]
The integer $S$  should be chosen sufficiently large to truncate \eqref{eq:system:3},
at least $S \ge M+3$.
\end{step}

\begin{step}\label{step:IntegratingFactor}
For $0 \leq m \leq S$ and $\zeta_k \in \Lambda$, compute
\[
\alpha_{m,k} = \dfrac{1}{T}\sum_{t=0}^{T-1} \exp\Bigl(-h[\mut](\zeta_k,\xi(\theta_t))\Bigr) e^{-i(-m)\theta_t}.
\]
\end{step}

\begin{step}
For $0 \leq m \leq S-M-2$, compute
\begin{alignat*}{3}
\beta_{m,\ell} &= \dfrac{1}{T}\sum_{t=0}^{T-1} \exp\Bigl(h[\mut](z_\ell,\xi(\theta_t))\Bigr) e^{-i(-m)\theta_t},
&\quad& z_\ell \in V,
\intertext{and}
\beta^{C}_{m,n} &= \dfrac{1}{T}\sum_{t=0}^{T-1} \exp\Bigl(h[\mut](x_n,\xi(\theta_t))\Bigr) e^{-i(-m)\theta_t},
&\quad& x_n \in C. 
\end{alignat*}
The function $h[\mut]$ is evaluated by the use of mid-point rule with
a quadrature rule taking the singularity of the Cauchy kernel into account~\cite{FujiwaraSadiqTamasanSIIMS,HilbertTransform}.
\end{step}

\begin{step}\label{step:bd}
For $0 \leq m \leq S$, compute
\[
\mathcal{I}^{\Lambda}_{-m,k} = \dfrac{1}{T} \sum_{t=0}^{T-1} I_{\text{bd}}\bigl(\zeta_k,\xi(\theta_t)\bigr) e^{i(-m)\theta_t},
\quad \zeta_k \in \Lambda, 
\]
where
\[
I_{\text{bd}}\bigl(\zeta_k,\xi(\theta_t)\bigr) = \begin{cases}
I_{\text{measured}}\bigl(\zeta_k,\xi(\theta_t)\bigr), \quad & \text{on $\Gamma_{+}$};\\
0, \quad & \text{on $\Gamma_{-}$}.
\end{cases}
\]
Let $\mathcal{I}^{\Lambda}_{-m}$ a piecewise constant approximation:
$\mathcal{I}^{\Lambda}_{-m}\bigl(\zeta(\omega)\bigr) = \mathcal{I}^{\Lambda}_{-m,k}$, $\omega_{k-1} < \omega < \omega_k$.
\end{step}

\begin{step}\label{step:convolution}
For $M \leq m \leq S$, compute
\[
\mathcal{J}^{\Lambda}_{-m,k}
= \sum_{s=0}^{S-m} \alpha_{s,k} \mathcal{I}^{\Lambda}_{-m-s,k},
\quad \zeta_k \in \Lambda. 
\]
\end{step}

\begin{step}\label{step:solveCSIE1}
For $m=S-2,S-3,\dotsc,M$, find
the solution to the system of linear equations
\[
\bigl(E_N-iH_N\bigr)
\begin{pmatrix} \mathcal{J}^{C}_{-m,1} \\ \vdots \\ \mathcal{J}^{C}_{-m,N} \end{pmatrix}
= 
\begin{pmatrix} \mathcal{P}^{-}_{-m}(x_1) \\ \vdots \\ \mathcal{P}^{-}_{-m}(x_N) \end{pmatrix},
\]
where
\begin{multline*}
\mathcal{P}^{-}_{-m}(x)
 = \dfrac{i}{2\pi} \sum_{k=1}^{K}\dfrac{\zeta'_k\Delta\omega_k}{x-\zeta_k}\mathcal{J}^{\Lambda}_{-m,k}\\
 - \dfrac{i}{2\pi}\sum_{k=1}^{K}\left\{\Impart \left(\dfrac{\zeta'_k}{\zeta_k - x}\right)\Delta\omega_k\right\}
\left\{
  \sum_{m+2 \leq m+2j \leq S} \mathcal{J}^{\Lambda}_{-m-2j,k}\Biggl( \dfrac{\:\overline{\zeta_k} - \overline{x}\:}{\zeta_k - x} \Biggr)^j
\right\}.
\end{multline*}
\end{step}

\begin{step}\label{step:BoundaryIntegral}
Compute $\mathcal{J}^{\Omega^{+}}_{-m}(z_{\ell})$ for $M \leq m \leq S-2$ and $z_{\ell} \in V\cap\Omega^{+}$,
where
\begin{multline*}
\mathcal{J}^{\Omega^{+}}_{-m}(z)
= \mathcal{P}^{-}_{-m}(z)
 + \dfrac{\Delta x}{2\pi i} \sum_{n=1}^{N}\dfrac{1}{x_n - z}\mathcal{J}^{C}_{-m,n}\\
 + \dfrac{\Delta x}{2\pi i} \sum_{n=1}^{N}\left(\dfrac{1}{x_n-z} - \dfrac{1}{x_n-\overline{z}}\right)
\left\{
  \sum_{m+2 \leq m+2j \leq S} \mathcal{J}^{C}_{-m-2j,n}\Biggl( \dfrac{\:x_n - \overline{z}\:}{x_n - z} \Biggr)^j
\right\}.
\end{multline*}
\end{step}

\begin{step}\label{step:deconvolution}
For $z_{\ell} \in V\cap\Omega^{+}$, compute
\begin{align*}
\mathcal{I}^{\Omega^{+}}_{-M,\ell} &= \sum_{s=0}^{S-M-2} \beta_{s,\ell} \mathcal{J}^{\Omega^{+}}_{-M-s}(z_{\ell}),
\intertext{and}
\mathcal{I}^{\Omega^{+}}_{-M-1,\ell} &= \sum_{s=0}^{S-M-3} \beta_{s,\ell} \mathcal{J}^{\Omega^{+}}_{-M-1-s}(z_{\ell}).
\end{align*}
\end{step}

\begin{step}\label{step:deconvolution:boundary}
For $x_n \in C$, compute
\begin{align*}
\mathcal{I}^{C}_{-M,n} &= \sum_{s=0}^{S-M-2} \beta^{C}_{s,n} \mathcal{J}^{C}_{-M-s,n},
\intertext{and}
\mathcal{I}^{C}_{-M-1,n} &= \sum_{s=0}^{S-M-3} \beta^{C}_{s,n} \mathcal{J}^{C}_{-M-1-s,n}.
\end{align*}
For $m=M,M+1$, denote by $\mathcal{I}^{C}_{-m}$ the piecewise constant approximation on $C$:
$\mathcal{I}^{C}_{-m}(x) = \displaystyle\sum_{n=1}^N \mathcal{I}^{C}_{-m}\chi_n(x)$.
\end{step}

\begin{step}\label{step:boundary}
Find $\mathcal{I}^{\partial\Omega^{+}}_{-M} = \sum a_{j} \varphi_{j} \in P_1(\partial\Omega^{+}_\Delta)$
by the  $L^2$-best approximation to $\mathcal{I}^{\Lambda}_{-M}$ (of \cref{step:bd})
and $\mathcal{I}^{C}_{-M}$ (of \cref{step:deconvolution:boundary}),
where $\varphi_{j}$ is the periodic and piecewise linear continuous function on $\partial\Omega^{+}$
with $\varphi_{j}(v_k) = \delta_{jk}$ (Kronecker's delta) and $v_k\in V\cap\partial\Omega^{+}$.
For a more detailed example, see \cite{FujiwaraSadiqTamasanSIIMS}. Now, 
$\set{ \mathcal{I}_{-M,\ell}^{\Omega^{+}} \sep z_\ell \in V \cap\Omega^{+}}$ in \cref{step:deconvolution}
and 
$\mathcal{I}^{\partial\Omega^{+}}_{-M}$ uniquely determine $\mathcal{I}_{-M} \in P_1(\mathcal{T})$.

Similarly, we can obtain $\mathcal{I}_{-M-1} \in P_1(\mathcal{T})$.
\end{step}

\begin{step}\label{step:ellipticCauchy}
For $m=M-1, M-2, \dotsc, 1,0$ (in descending order),
find $\mathcal{I}_{m} \in P_1(\mathcal{T})$ as follows.
Assume that $\mathcal{I}_{-m-2}, \mathcal{I}_{-m-1} \in P_1(\mathcal{T})$ are already computed. 
If $\mathcal{I}_{-m-2}|_{\tau_{\ell}} = a_{\ell}x_1 + b_{\ell}x_2 + c_{\ell}$,
then $\partial\mathcal{I}_{-m-2}|_{\tau_{\ell}} = (a_{\ell} - ib_{\ell})/2$.\\For $\tau_\ell \in \mathcal{T}$ compute
\[
\mathcal{F}_{-m,\ell}
= -\partial\mathcal{I}_{-m-2}|_{\tau_{\ell}}
+ \{2\pi \mus(r_\ell) p_{-m-1}(r_\ell) - \mut(r_\ell)\}\mathcal{I}_{-m-1}(r_\ell).
\]
Compute for $x_n \in C$, 
\[
\mathcal{R}_{-m,n}
= -\dfrac{2}{\pi}\sum_{\tau_\ell \in \mathcal{T}}\dfrac{\mathcal{F}_{-m,\ell} |\tau_\ell|}{c_\ell - x_n}
+ \dfrac{1}{\pi i}\sum_{k=1}^{K}\dfrac{\zeta'_k\Delta\omega_k}{\zeta_k - x_n}\mathcal{I}^{\Lambda}_{-m,k},
\]
where $|\tau_\ell|$ is the area of $\tau_\ell$.
Then solve the linear equation
\[
\bigl(E_N-iH_N\bigr)
\begin{pmatrix} \mathcal{I}^{C}_{-m,1} \\ \vdots \\ \mathcal{I}^{C}_{-m,N} \end{pmatrix}
= 
\begin{pmatrix} \mathcal{R}_{-m,1} \\ \vdots \\ \mathcal{R}_{-m,N} \end{pmatrix}.
\]
And compute for $z_\ell \in V\cap\Omega^{+}$,
\[
\mathcal{I}^{\Omega^{+}}_{-m,\ell}
= -\dfrac{1}{\pi}\sum_{\tau_{j} \in \mathcal{T}} \dfrac{\mathcal{F}_{-m,j}|\tau_{j}|}{c_j-z_{\ell}}
+ \dfrac{1}{2\pi i}\sum_{k=1}^{K}\dfrac{\zeta'_k\Delta\omega_k}{\zeta_k - z_\ell}\mathcal{I}^{\Lambda}_{-m,k}
+ \dfrac{\Delta x}{2\pi i}\sum_{n=1}^{N}\dfrac{\mathcal{I}^{C}_{-m,n}}{x_n - z_\ell}.
\]
For $z_\ell \in V\cap\partial\Omega^{+}$,
find $\mathcal{I}^{\partial\Omega^{+}}_{-m} \in P_1(\partial\Omega^{+}_{\Delta})$
by the best approximation in the $L^2$ sense to $\mathcal{I}^{\Lambda}_{-m}$ and $\mathcal{I}^{C}_{-m}$
similarly to \cref{step:boundary}.

These $\set{ \mathcal{I}^{\Omega^{+}}_{-m,\ell} \sep z_\ell \in V\cap\Omega^{+} }$ and $\mathcal{I}^{\partial\Omega^{+}}_{-m}$
give $\mathcal{I}_{-m} \in P_1(\mathcal{T})$.
\end{step}

\begin{step}
For each triangle $\tau_{\ell} \in \mathcal{T}$,
let $\mathcal{I}_{-1}|_{\tau_{\ell}} = a_{\ell} x_1 + b_{\ell} x_2 + c_{\ell}$.
The reconstruction of $q|_{\tau_{\ell}}$ is given by \eqref{eq:system:1},
\[
q_{\ell}
=
\Repart(a_{\ell}) + \Impart(b_{\ell})
 + \bigl\{\mut(r_{\ell})-2\pi\mus(r_{\ell})p_0(r_{\ell})\bigr\}\Repart\bigl(\mathcal{I}_0(r_{\ell})\bigr).
\]
This ends the Algorithm.
\end{step}

\bigskip

We remark here on the dual role played by the truncation parameter $M$ as a regularization parameter to control both accuracy and stability. 
On the one hand $M$  sets the degree of the trig-polynomial (in the angular variable) approximation of the scattering phase function $p(z,\; \cdot)$ influencing the accuracy. On the other hand, the terms after the $M$-th mode in the system \eqref{eq:system:2} are truncated, thus resulting \eqref{eq:system:3}. This also leads to truncation of higher frequency modes of the solution, which, in general help the stabilization of the numerical procedure.  
Since the present algorithm gives a point-wise reconstruction, one may choose a {locally optimal} $M$, e.g., by observing (for several values of $M$) the degradation of accuracy in integration due to the singularities of the Cauchy kernel near the boundary.

\section{Numerical Experiments}\label{sec:numstudy}

In this section the proposed algorithm is demonstrated to show its validity.
In particular, numerical results by the cut-off method \eqref{eq:inteq:d} and the ex-log method \eqref{eq:inteq:d2} are compared to solve CSIEs.
An example of choice of parameters are also exhibited.
Throughout the section, all computations are processed on EPYC 7643 with the IEEE754 double precision arithmetic.

Revisiting the example in \cite{FujiwaraSadiqTamasanPartialIP},
suppose that $\Omega$ is the unit disk with inclusions
\begin{align*}
  B_1 &= \set{ (z_1,z_2) \sep (z_1-0.5)^2 + z_2^2 < 0.3^2 }, \\
  B_2 &= \left\{ (z_1,z_2) \sep (z_1+0.25)^2 + \left(z_2-\dfrac{\sqrt{3}}{4}\right)^2 < 0.2^2\right\}, \\
  B_3 &= \set{ (z_1,z_2) \sep z_1^2 + (z_2+0.6)^2 < 0.3^2 },\\
\intertext{and}
  R &= \{ (z_1,z_2) \sep -0.25 < z_1 < 0.5, |z_2| < 0.15 
 \};  \text{ see Figure~\ref{fig:configuration}}.
\end{align*}
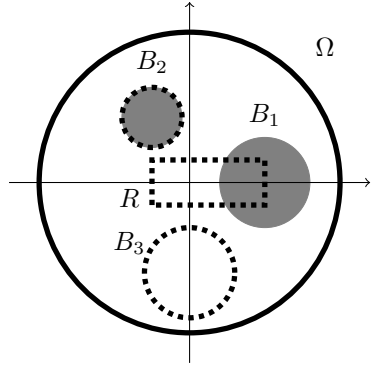
\begin{figure}[h]
  \begin{tikzpicture}[scale=2]
\draw [line width=2] (0,0) circle (1); 
  \node at (0.9,0.9) {$\Omega$};

\draw [color=gray,fill=gray,line width=0] (0.5,0) circle (0.3); 
  \node at (0.5,0.45) {$B_1$};

\draw [color=gray,fill=gray,line width=0] (-0.25,0.433013) circle (0.2); 
    \node at (-0.25,0.8) {$B_2$};

\draw [dotted,line width=2] (-0.25,0.433013) circle (0.2); 
\draw [dotted,line width=2] (0,-0.6) circle (0.3); 
  \node at (-0.4,-0.4) {$B_3$};

\draw [dotted,line width=2] (-0.25,-0.15) rectangle (0.5,0.15); 
  \node at (-0.4,-0.1) {$R$};

\draw [->] (-1.2,0) -- (1.2,0);
\draw [->] (0,-1.2) -- (0,1.2);

\end{tikzpicture}
  \caption{\label{fig:configuration}An absorbing and scattering domain $\Omega$: highly absorbing areas are $B_1$ and $B_2$;  the internal source is supported in $B_2\cup B_3\cup R$.}
\end{figure}
The internal source $q$ (to be recovered in $\Omega^{+}$ in our inverse problem) is given by
\[
q(z) =
  \begin{cases}
    2, \qquad & \text{in $R$};\\
    1, \qquad & \text{in $B_2 \cup B_3$};\\
    0, \qquad & \text{otherwise}.
  \end{cases}
\]

The attenuation coefficient $\displaystyle\mut=\mua+\mus$ accounts for the absorption (via $\mua$) and for the scattering off of the direction of counting (via $\mus$). In our numerical experiment $\mus = 3$ in $\Omega$ and
\[
\mua(z) =
  \begin{cases}
    2, \qquad & \text{in $B_1$};\\
    1, \qquad & \text{in $B_2$};\\
    0.1, \qquad & \text{otherwise}.
  \end{cases}
\]
The scattering kernel is the two dimensional Henyey-Greenstein (Poisson) kernel
\[
p(z; \xi,\xi') = \dfrac{1}{2\pi}\dfrac{1-g^2}{1-(\xi\cdot\xi')g + g^2},
\]
with $g=0.5$.

Measurement data on the boundary arc $\Lambda$ is obtained by the numerical computation of the forward problem \eqref{eq:rte} and \eqref{eq:noincoming} with the above choice of coefficients. We use the discontinuous Galerkin method with piecewise constant basis~\cite{fujiwaraDGRTE},
where $\Omega$ and $S^1\approx \Real/2\pi$ are respectively divided into $27{,}407{,}104$ triangular domains and $360$ intervals with equal length.
The outflow is measured at 3,141 equi-spaced nodes $\zeta\in\Lambda$ in direction $\xi\in S^1$ at $1$ radian intervals with $(\zeta,\xi) \in \Gamma_{+}$.
Figure~\ref{fig:measurement} illustrates the computed outflow through a few points on the boundary arc $\Lambda$,
where dependency on $\xi$ is represented in the polar coordinate with the center $\zeta$ indicated by cross symbols ($\times$) and radius $2I(\zeta,\xi)$.
\begin{figure}[th]
\begin{minipage}{.5\textwidth}
\centering
  \includegraphics[width=.8\textwidth,bb=220 140 820 680]{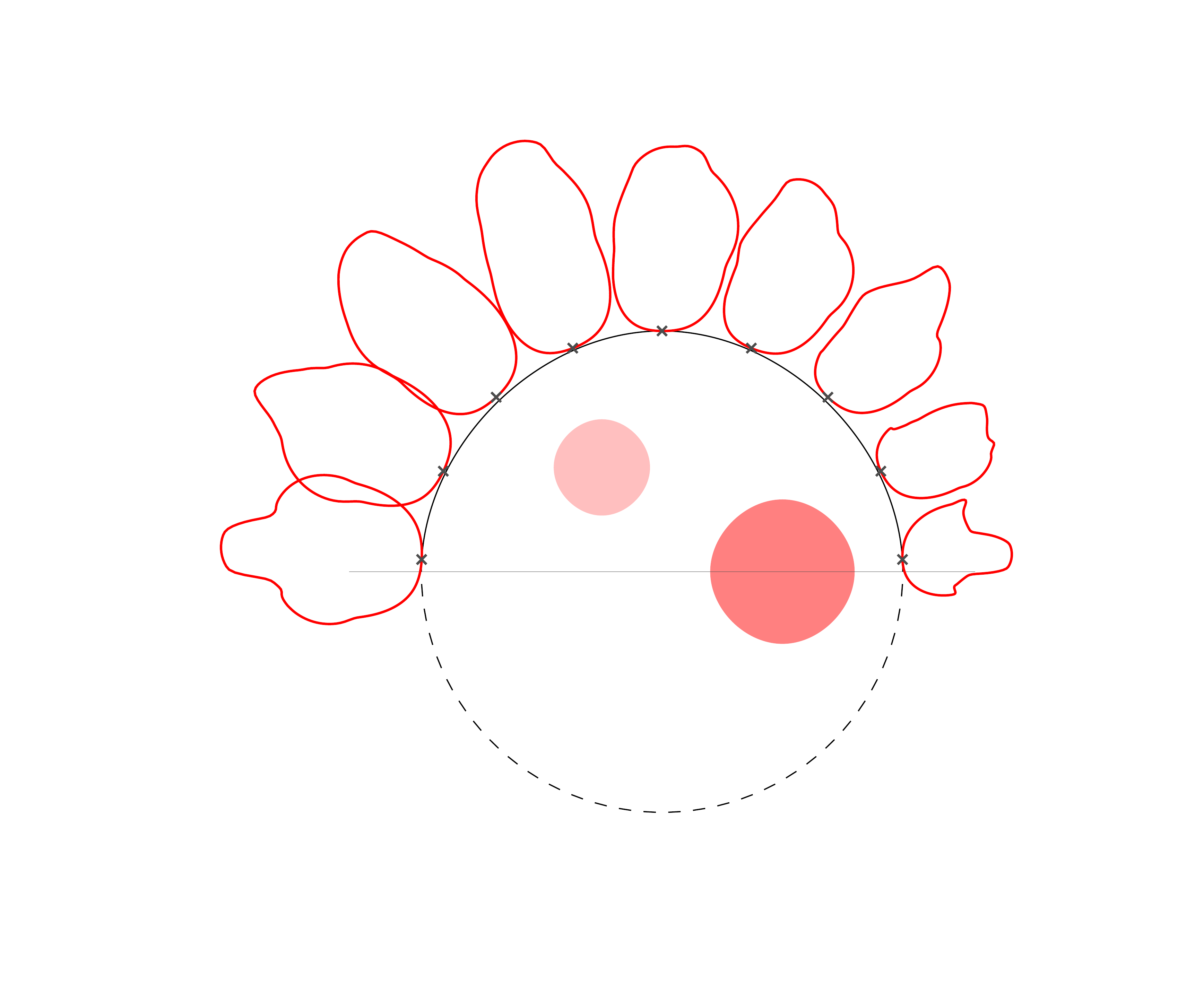}
\end{minipage}
\begin{minipage}{.3\textwidth}
  \includegraphics[width=.9\textwidth,bb=75 0 275 175]{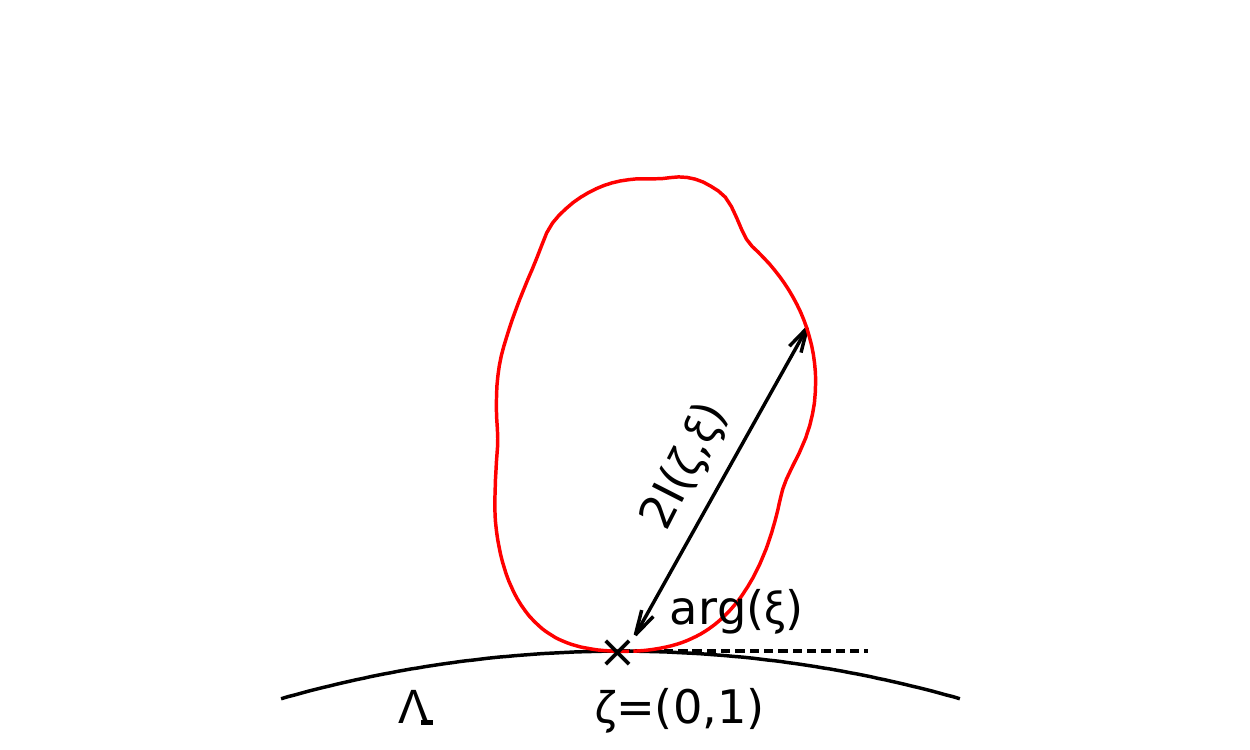}
\end{minipage}
\caption{\label{fig:measurement}Measurement data $I|_{\Lambda_+}$. The red curves depict the outflow $I(\zeta,\xi)$ in polar coordinate centered at $\zeta$ (cross symbol $\times$) and radius $2I(\zeta,\xi)$. The outflow through $\zeta=(0,1)$ is magnified in the right figure. The red inclusions show the highly absorbing regions given as a priori information (the medium is assumed known).}
\end{figure}

In the reconstruction \cref{step:omega}, the domain of interest $\Omega^{+}$ is approximated by $\Omega^{+}_{\Delta}$ and  consists of $8{,}631$ triangles. The number of triangles used in the reconstruction is much smaller than the one used in solving the forward problem, and they are not a sup-partition. In particular, the numerical experiments avoid an inverse crime.

In order to determine the truncation parameter $S$ in Step 3, the decay of $\alpha_m$ and $\beta_m$ are examined. Figure~\ref{fig:alpha_beta} presents $\max\{ |\alpha_{m,k}| \sep \zeta_k \in \Lambda \}$ and $\max\{ |\beta_{m,\ell}| \sep r_\ell \in \Omega^{+} \}$, where horizontal axis is the mode $m$. The Hilbert transform is computed by the method in ~\cite{FujiwaraSadiqTamasanSIIMS,HilbertTransform} with the composite mid-point rule. A comparison of two discretizations is shown in Figure~\ref{fig:alpha_beta}: The coarse discretization  (indicated by the green  $+$ symbols) uses $100$ spatial nodes for the Hilbert transform and $360$ angles for the Fourier series, whereas the fine discretization (shown in purple  $\times$ symbols) uses $10{,}000$ spatial nodes and $2{,}880$ angles. 
Both discretizations yield numerical results in good agreement with each other, and $\norm{\alpha_m} < 0.01$ and $\norm{\beta_m} < 0.01$ for $m \ge 175$.  By taking the number of measured directions into account, we adopt $S=175$ and use the coarse discretization.
\begin{figure}[ht]
  \includegraphics[width=.48\textwidth]{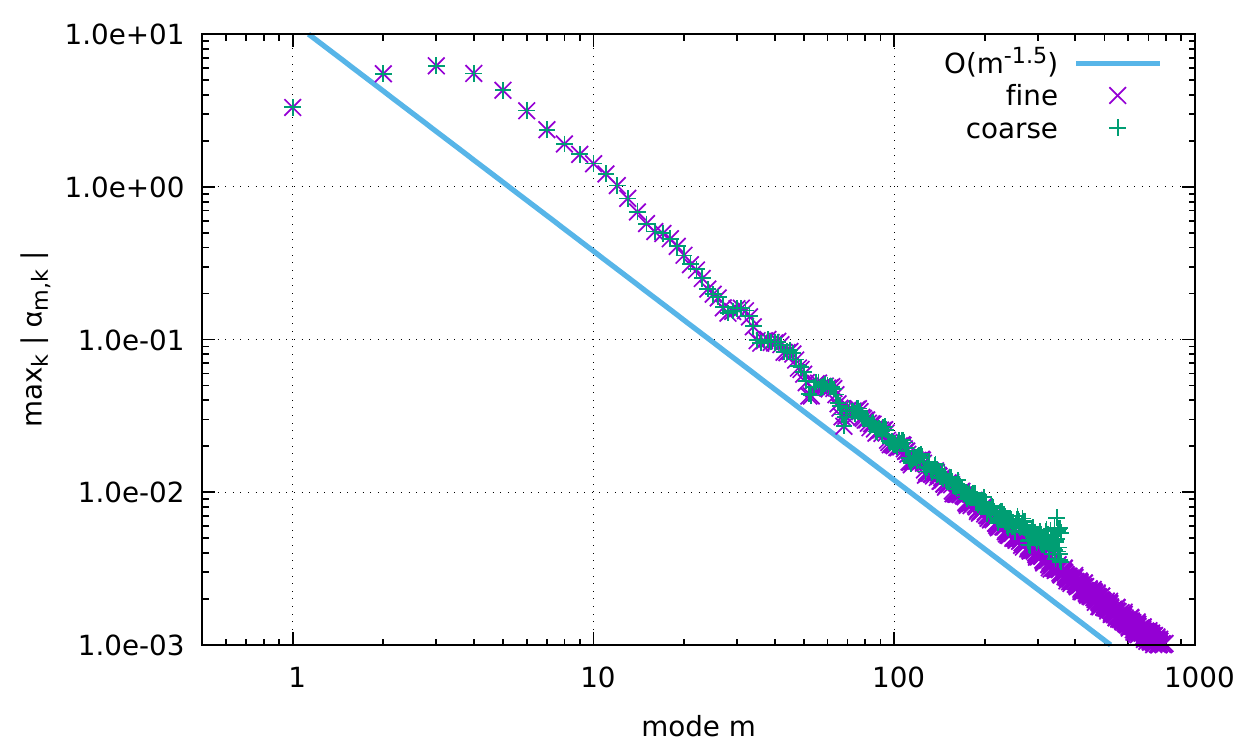}
  \includegraphics[width=.48\textwidth]{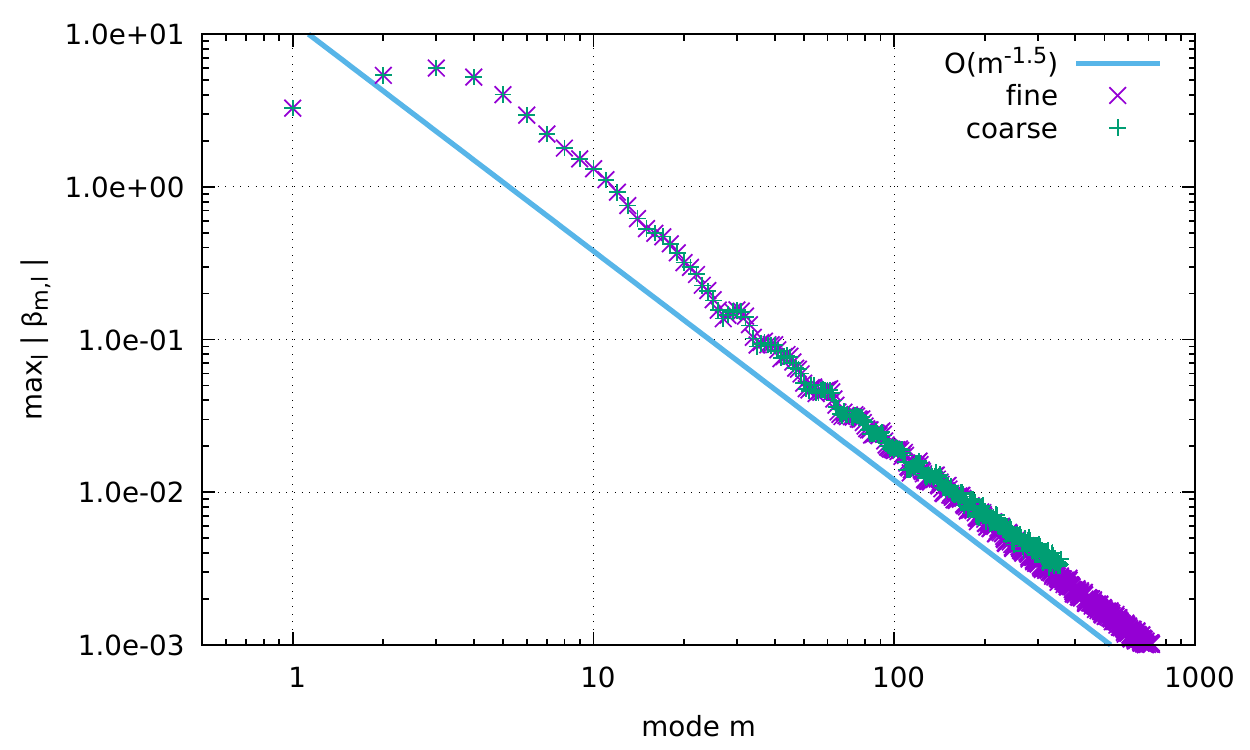}
\caption{\label{fig:alpha_beta}Decay of $\norm{\alpha_{m}}$ (left) and $\norm{\beta_{m}}$ (right). Green plus ($+$) symbols are results by coarse discretization, while purple cross ($\times$) are those by fine discretization.}
\end{figure}

To choose an optimal truncation parameter $M$ we use the criterion proposed in \cite{FujiwaraSadiqTamasanSIIMS}. Namely, we compute the $L^2$-norms of the imaginary part of the reconstructed $q$ for varying values of $M$, and choose the one corresponding to the smallest norm,  see Figure~\ref{fig:M}. 
When the cut-off method  with $\epsilon=0$ is used, then $\norm{\Impart q}_2$ attains the minimum for $M=10$,
while $M=12$ gives the minimum for the ex-log method with $\epsilon=0.01$.

Note that the parameters $M$ and $S$ are optimized without the use of the solution to the forward problem.

\begin{figure}[ht]
  \includegraphics[width=.48\textwidth]{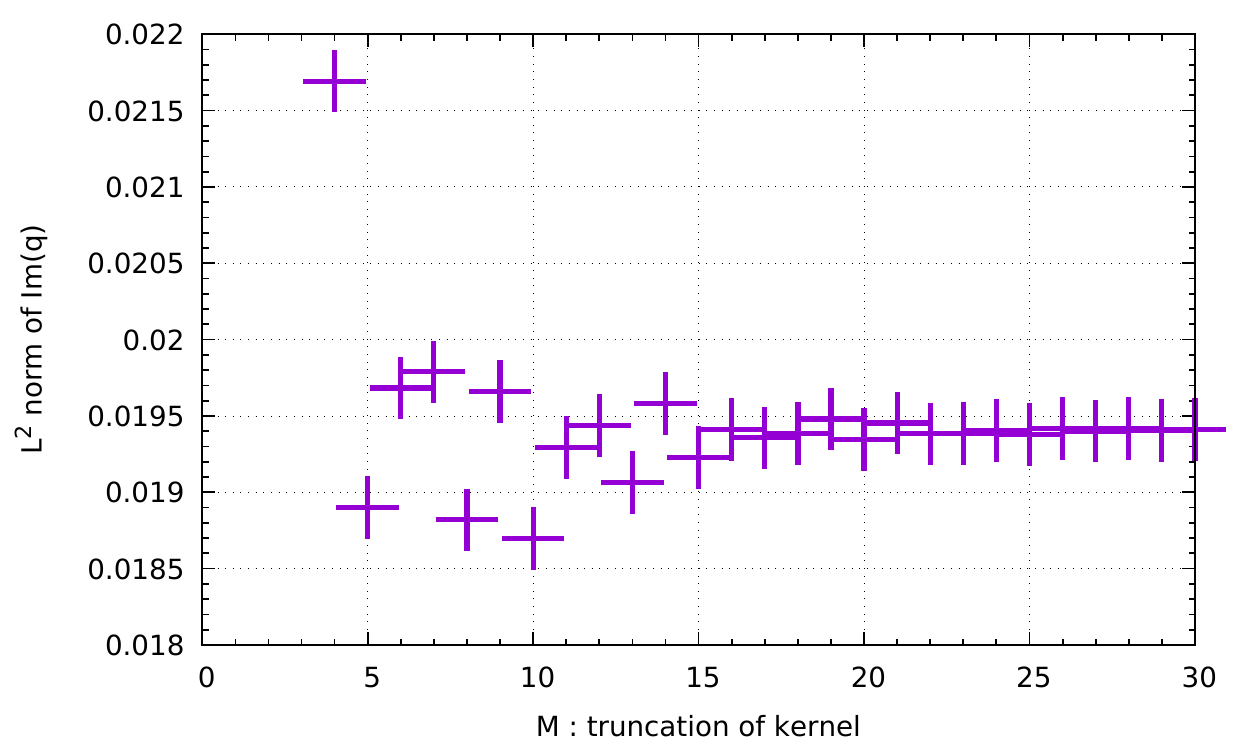}
  \includegraphics[width=.48\textwidth]{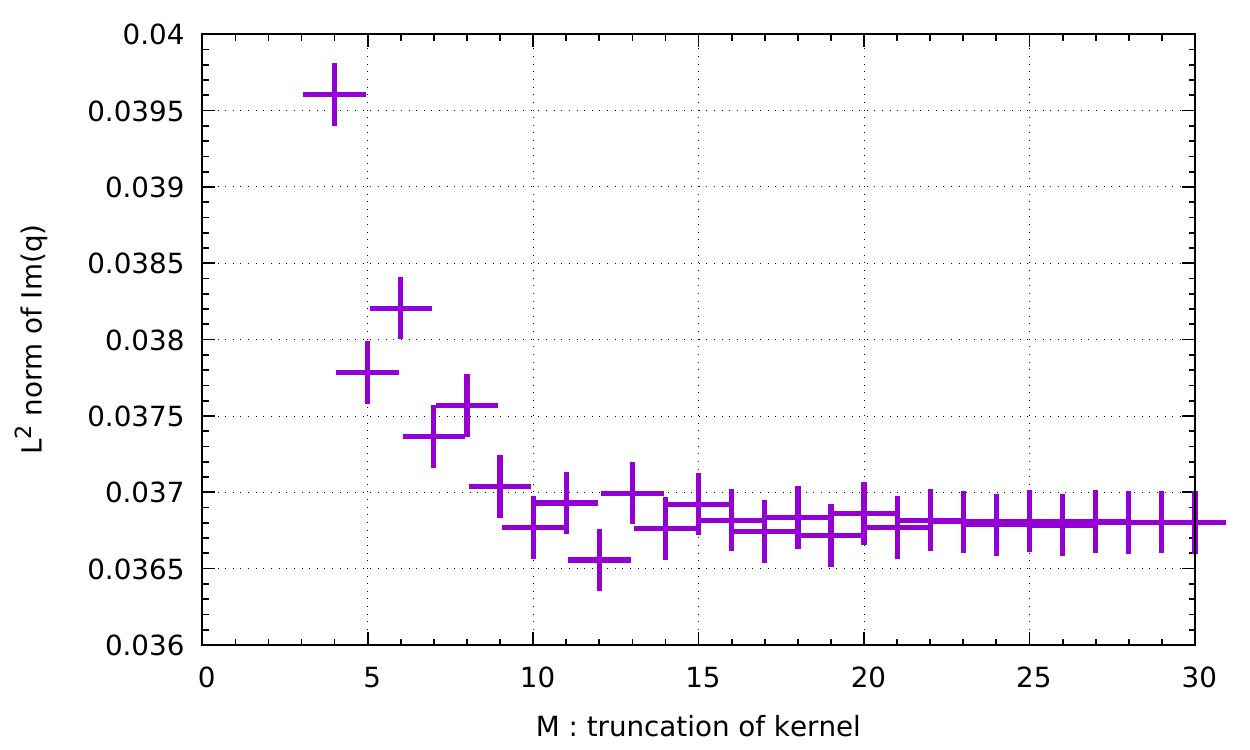}
\caption{\label{fig:M}$\norm{\Impart q}_2$ of numerically reconstructed source. (Left) CSIEs are solved by the cut-off method with $\epsilon=0$ yielding an optimal parameter $M=10$. 
(Right) CSIEs are solved by the ex-log method with $\epsilon=0.01$ yielding an optimal parameter $M=12$.}
\end{figure}

CSIEs in \cref{step:solveCSIE1} and \cref{step:ellipticCauchy} share the same coefficient matrix $(1+\epsilon)E_N - iH_N$ and thus it enables us to use LU decomposition for saving computational time. We take $N=2{,}000$ so that $\Delta x = 0.001$ is close to $\Delta\omega = \pi/3{,}141$. 
Figure \ref{fig:J-173} depicts numerical solutions $\mathcal{J}^{C}_{-173,n}$ by the cut-off method with $\epsilon=0$ and by the ex-log method with $\epsilon=0.01$ as the first iteration, and they show similar trends. The results using the ex-log method with $\epsilon=0$ are illustrated in Figure~\ref{fig:J-173e0}. 
Note that both the real and imaginary parts in Figure~\ref{fig:J-173e0} oscillate in seriously wider vertical ranges than those in Figure~\ref{fig:J-173}.
It shows that the ex-log method with $\epsilon=0$ is worse ill-conditioned and harder to solve accurately.
Numerical solutions $\mathcal{I}^{C}_{-1,n}$ and $\mathcal{I}^{C}_{0,n}$ as the final iterations in \cref{step:ellipticCauchy}
are shown in Figure~\ref{fig:I0}, and contrasted with those (in orange) computed directly using the numerical solution of the forward problem . Their real parts are similar characteristics, and relative magnitudes of their imaginary parts to real parts are also equivalently small.
Figure~\ref{fig:J-173} and Figure~\ref{fig:I0} also indicate that numerical solutions by the ex-log method have sharp peaks near the edges, while the cut-off method generates milder variation.
\begin{figure}[ht]
  \includegraphics[width=.48\textwidth]{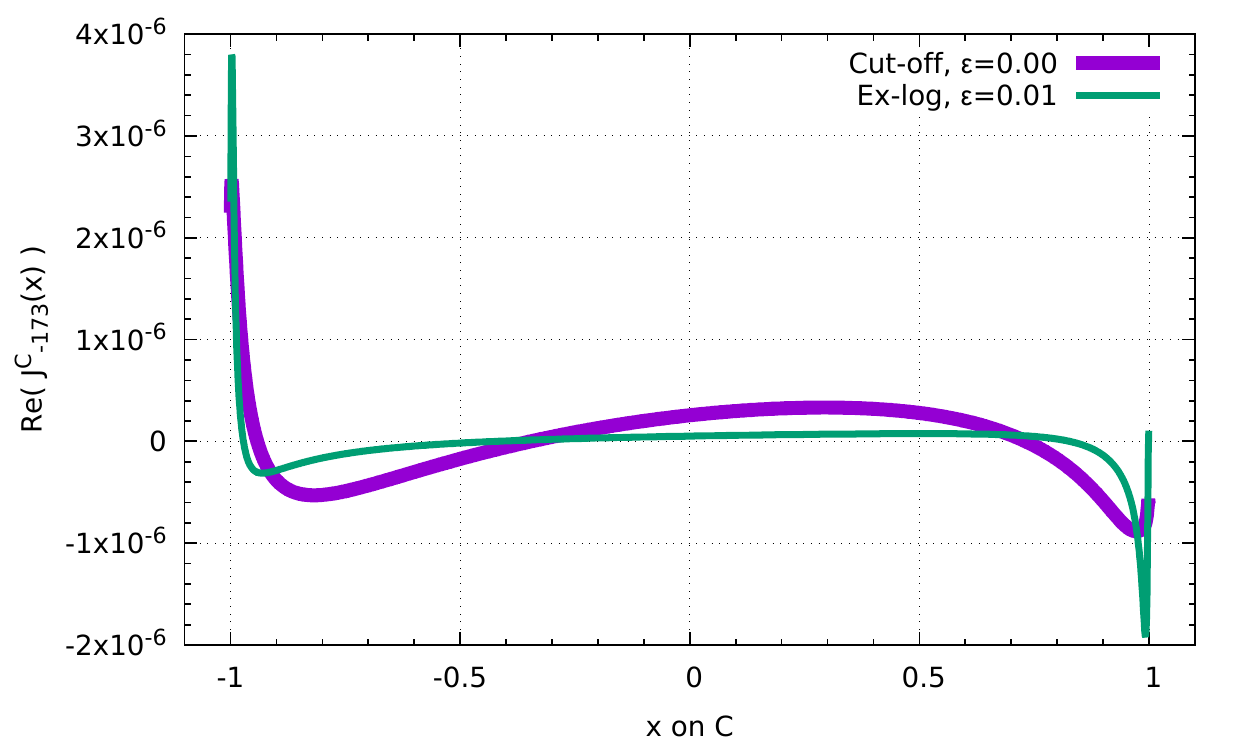}
  \includegraphics[width=.48\textwidth]{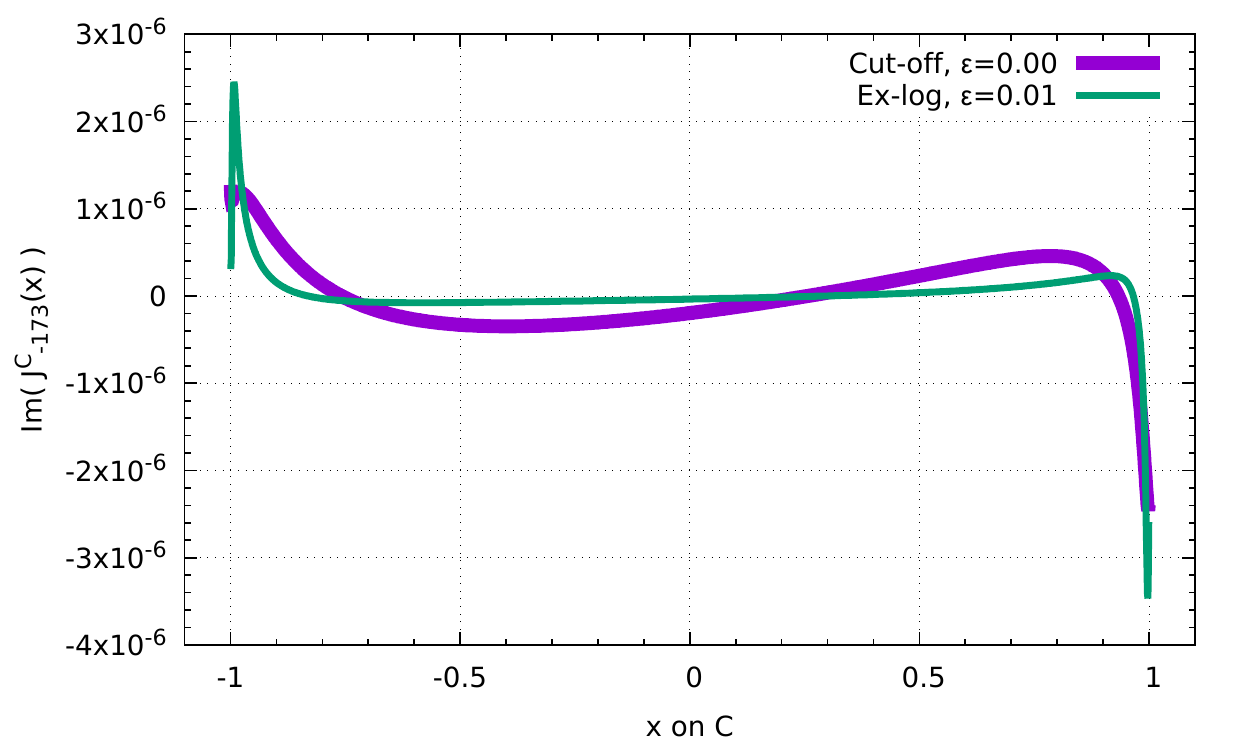}
\caption{\label{fig:J-173}Numerical solutions to CSIE, $\mathcal{J}^{C}_{-173,n}$ in \cref{step:solveCSIE1}. The left and right figures respectively show their real and imaginary parts.}
\end{figure}
\begin{figure}[ht]
  \includegraphics[width=.48\textwidth]{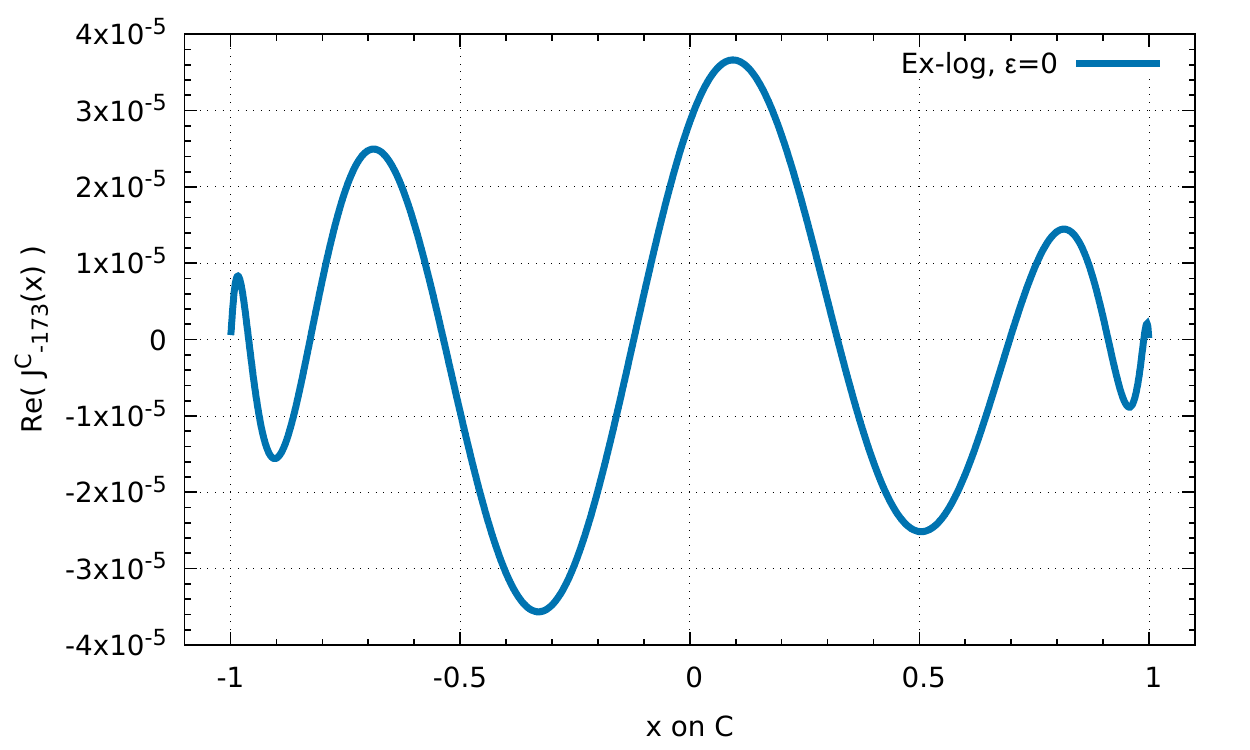}
  \includegraphics[width=.48\textwidth]{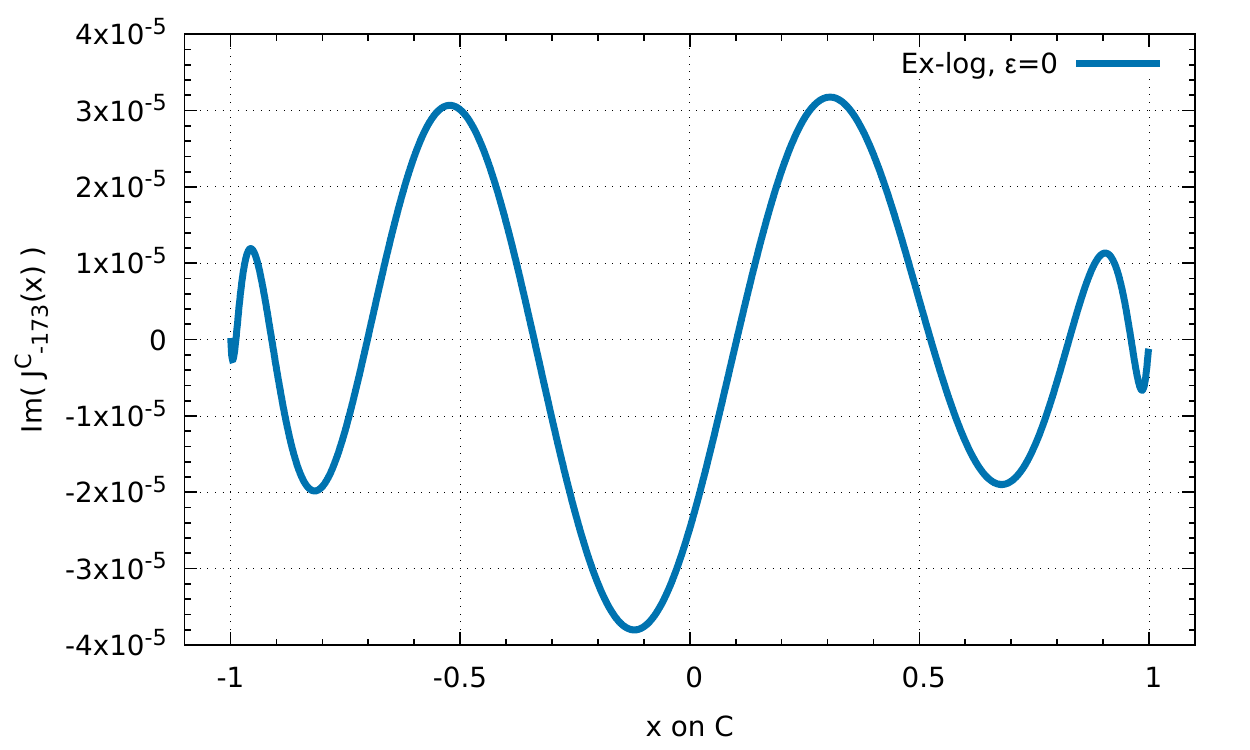}
\caption{\label{fig:J-173e0}The real (left) and imaginary (right) part of the numerical solutions to CSIE, $\mathcal{J}^{C}_{-173,n}$ in \cref{step:solveCSIE1} by the ex-log method with $\epsilon=0$.}
\end{figure}
\begin{figure}[ht]
  \includegraphics[width=.48\textwidth]{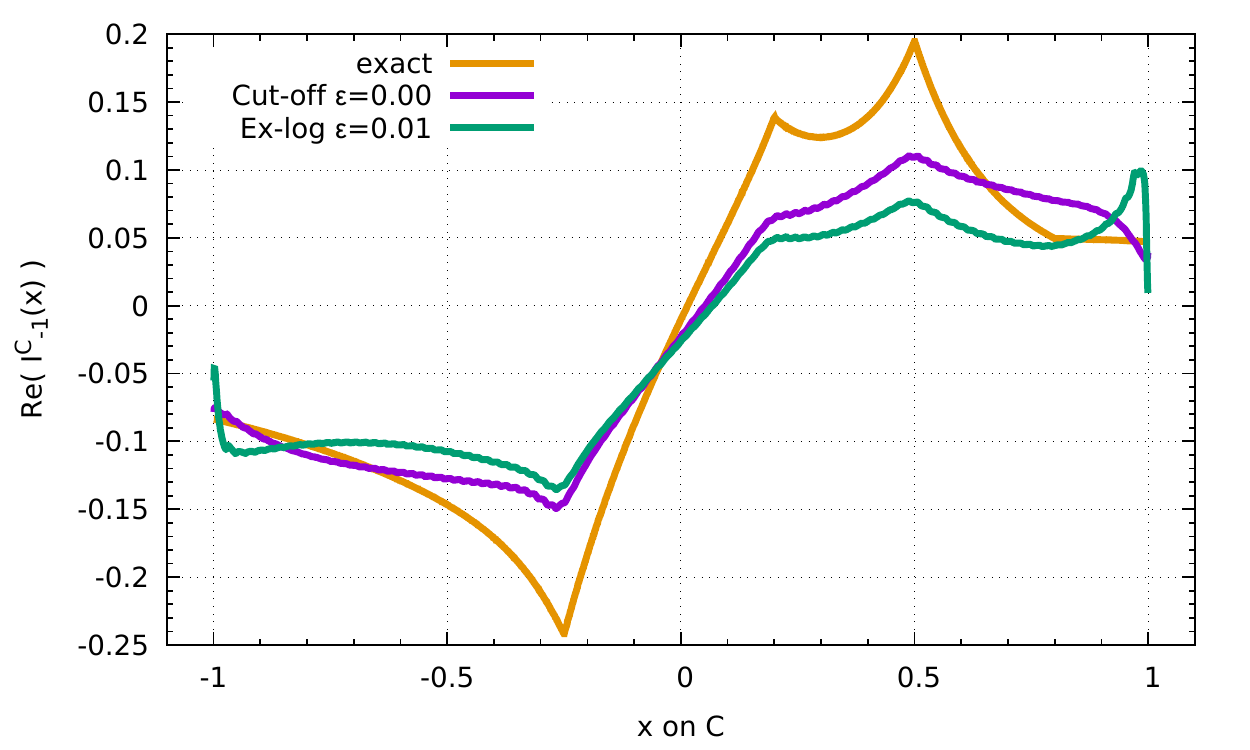}
  \includegraphics[width=.48\textwidth]{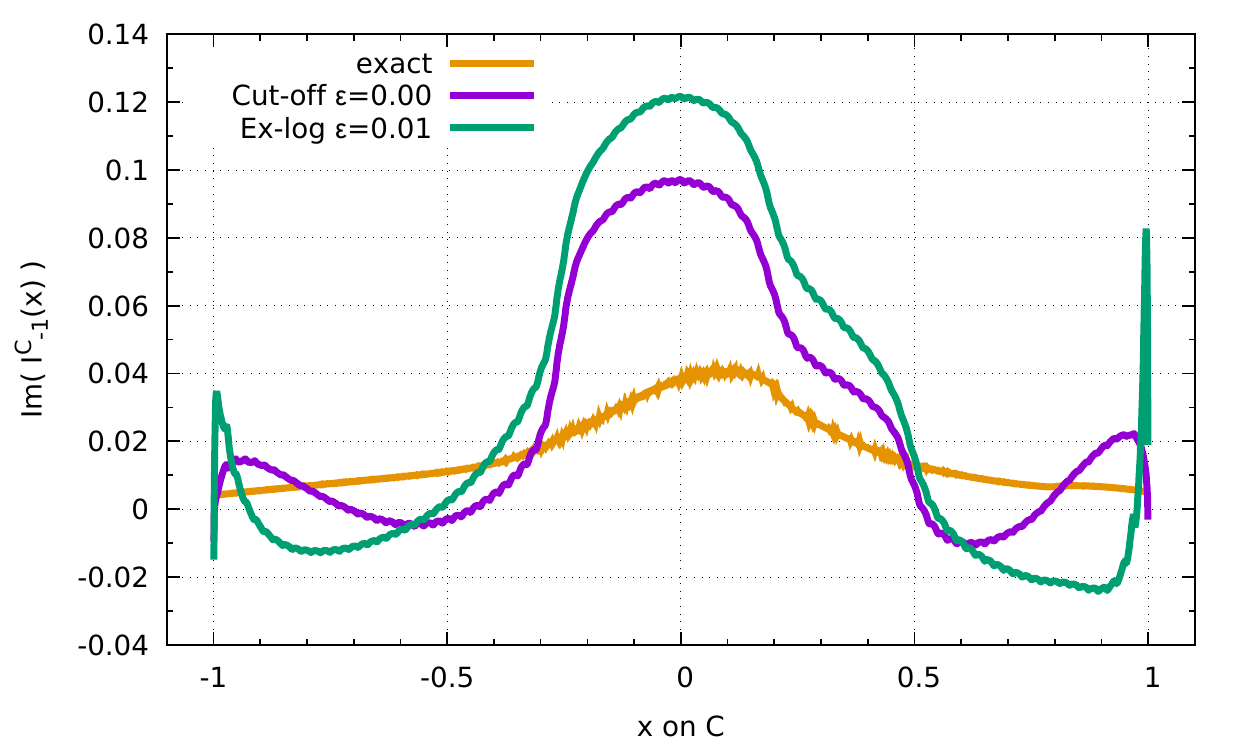}

  \includegraphics[width=.48\textwidth]{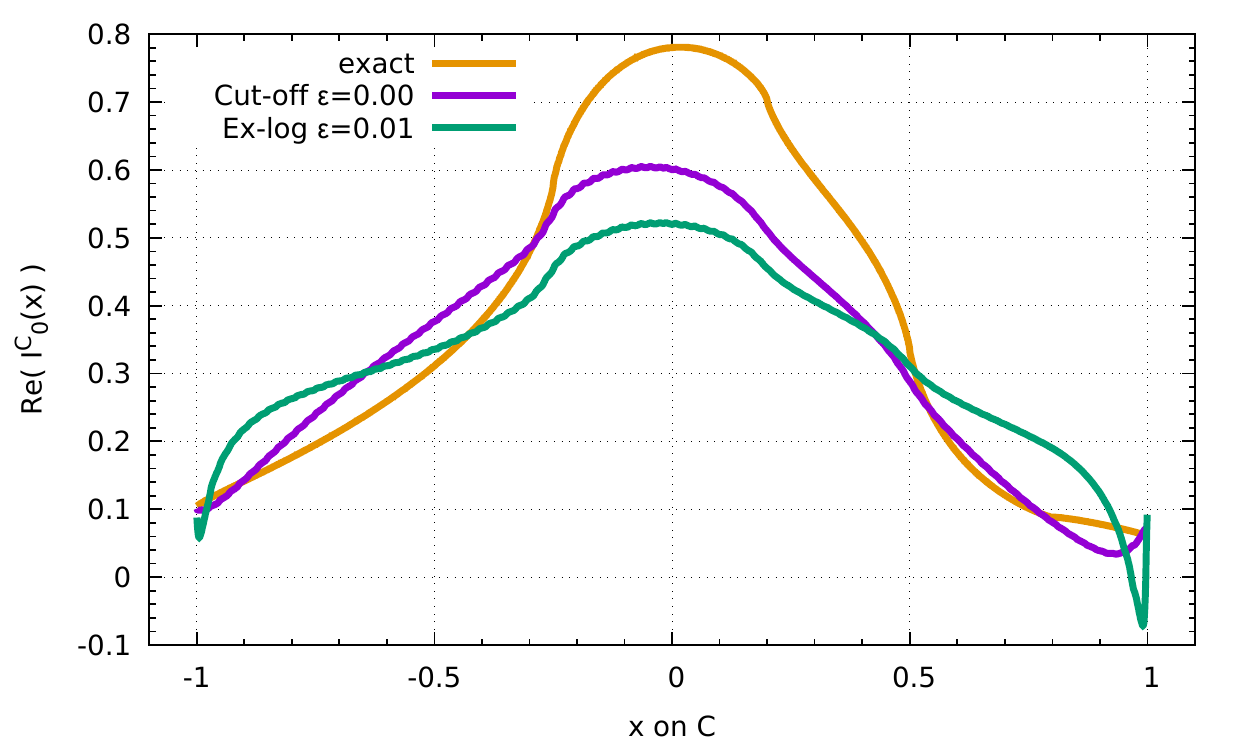}
  \includegraphics[width=.48\textwidth]{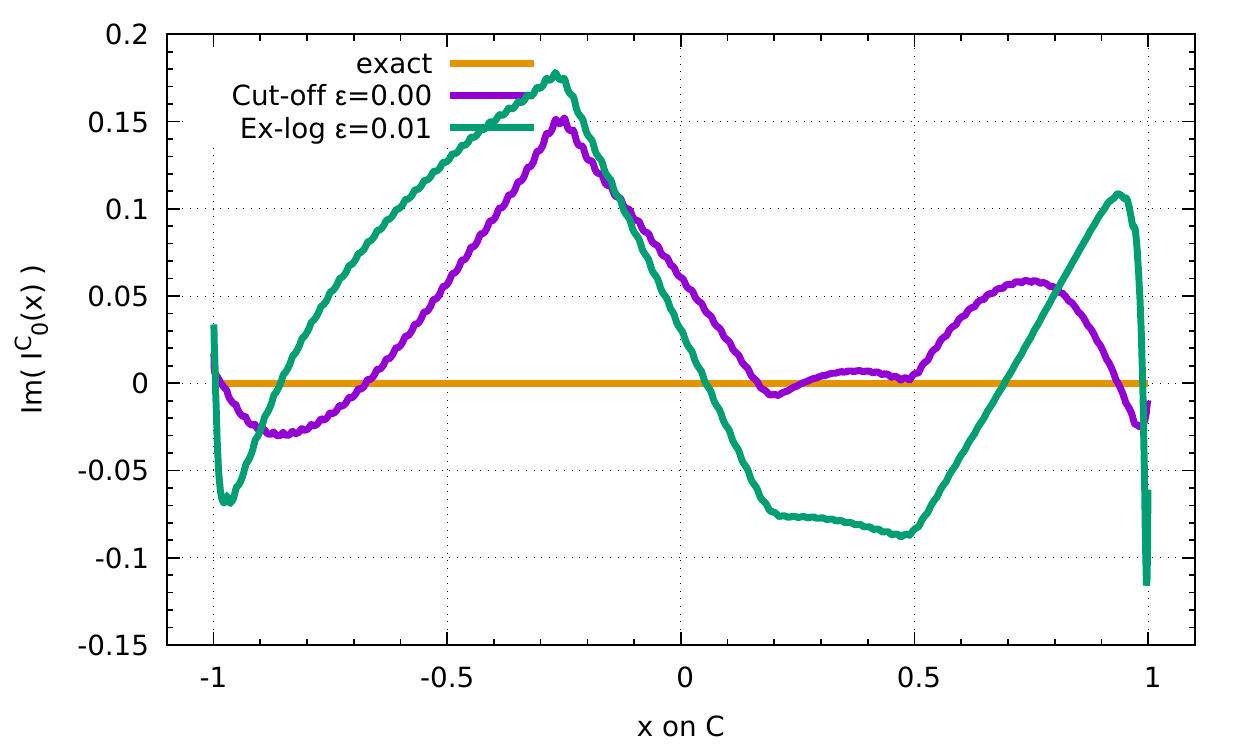}
\caption{\label{fig:I0}Numerical solutions to CSIE, $\mathcal{I}^{C}_{-1,n}$ (upper) and $\mathcal{I}^{C}_{0,n}$ (lower) in \cref{step:ellipticCauchy}. The left and right figures respectively show the real and imaginary parts.}
\end{figure}

\begin{figure}[ht]
\begin{minipage}{.7\textwidth}
  \includegraphics[width=\textwidth,bb=75 50 355 185]{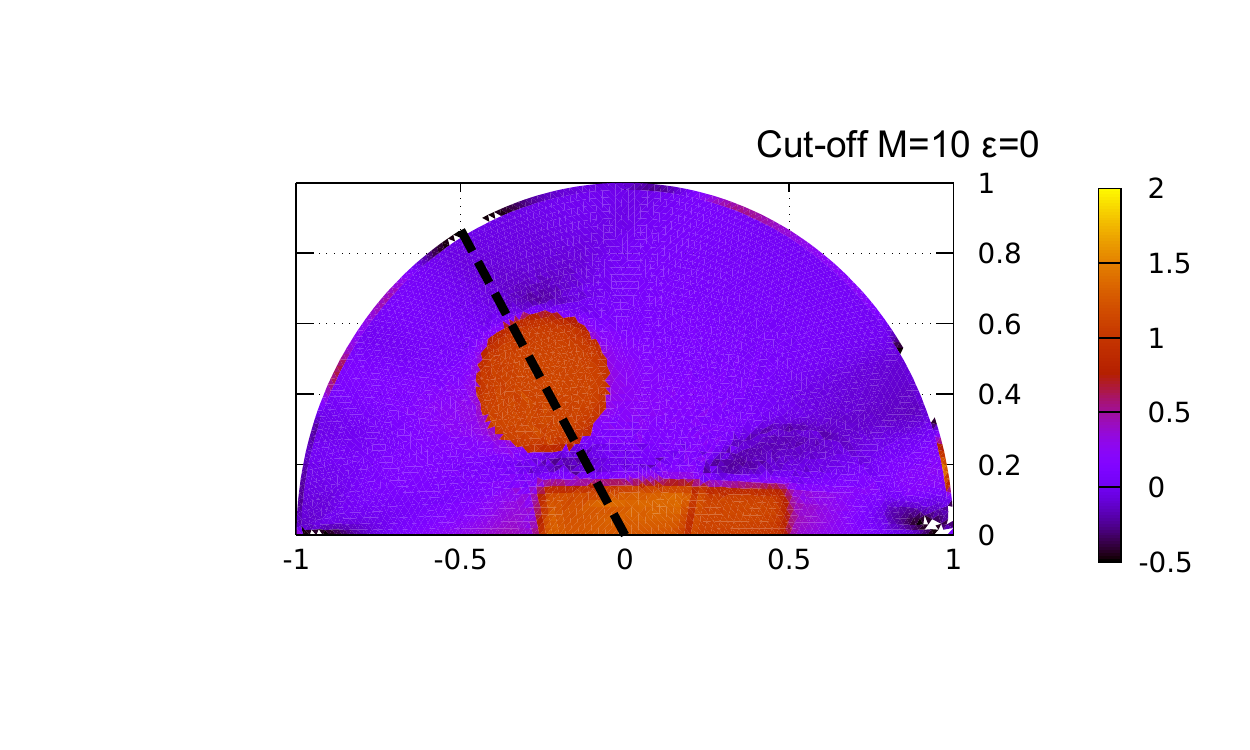}
\end{minipage}
\begin{minipage}{.25\textwidth}
  \includegraphics[width=\textwidth,bb=55 0 190 195]{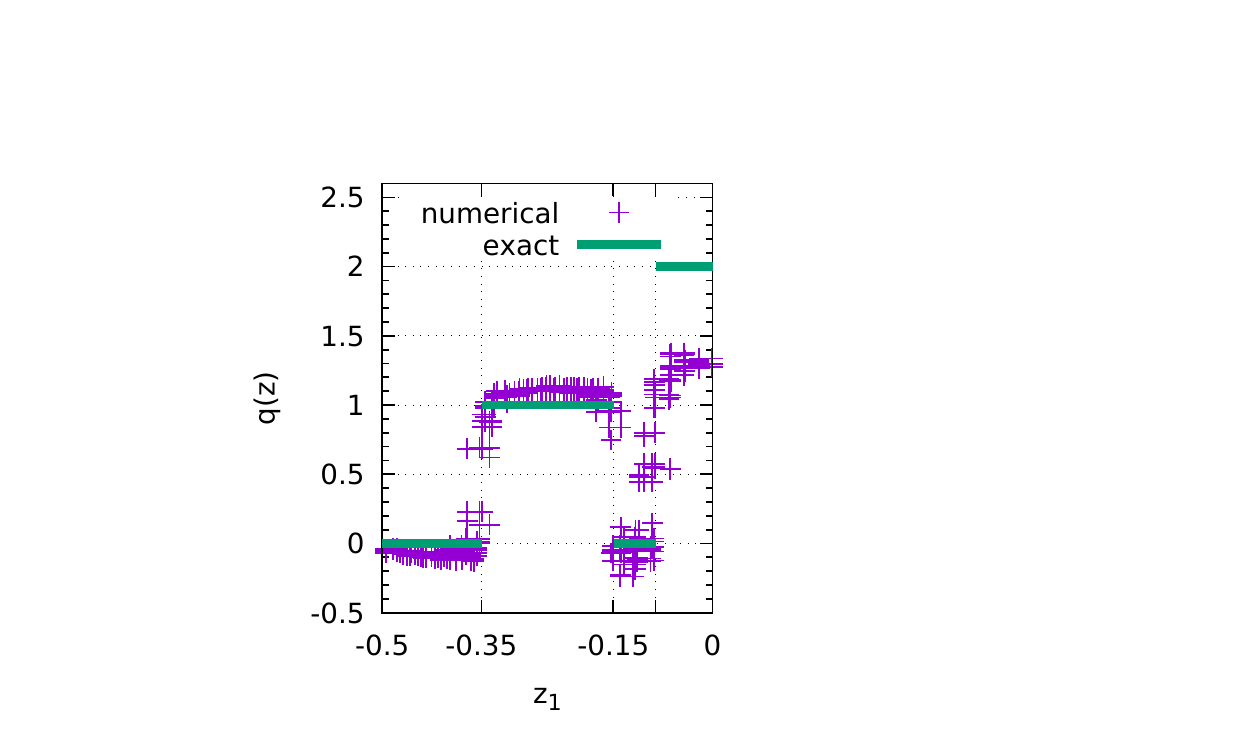}
\end{minipage}
\caption{\label{fig:qcutoff}Numerical reconstruction of the source $q$ by the cut-off method in \eqref{eq:inteq:d} without $\epsilon$-regularization. On the right is the reconstructed section on the dotted segment.}
\end{figure}

\begin{figure}[ht]
\begin{minipage}{.7\textwidth}
  \includegraphics[width=\textwidth,bb=75 50 355 185]{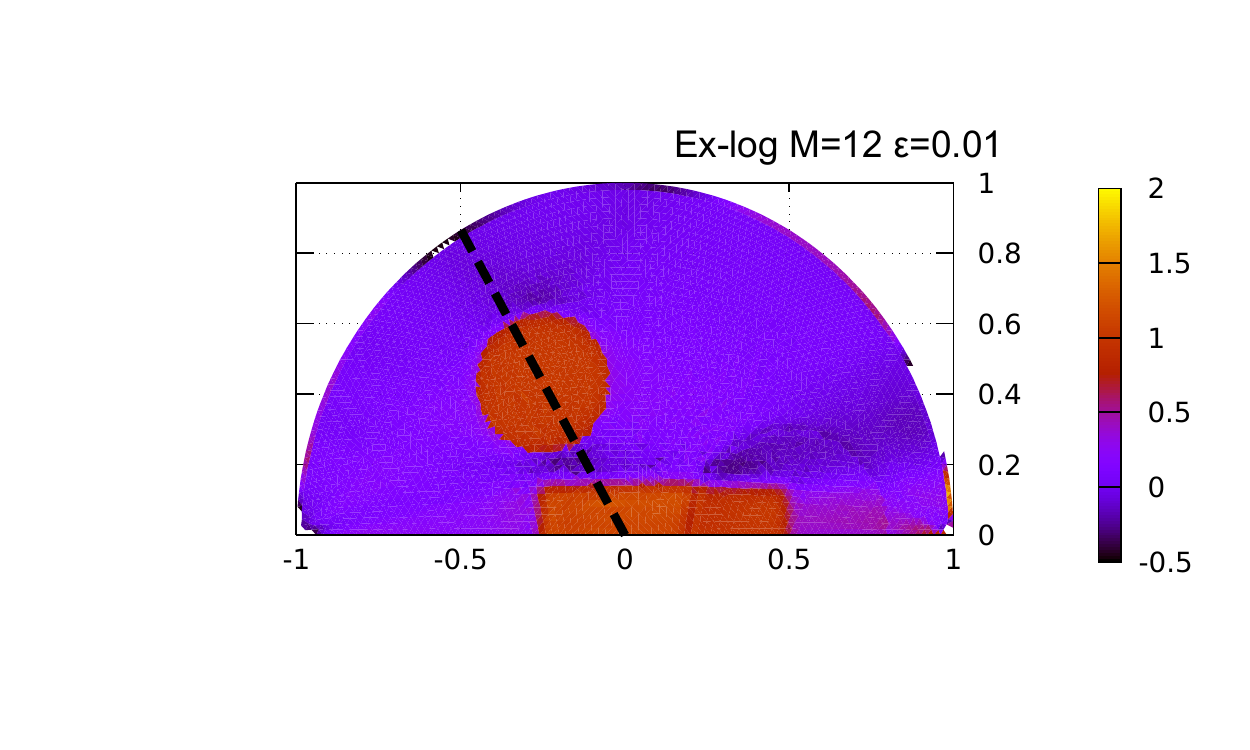}
\end{minipage}
\begin{minipage}{.25\textwidth}
  \includegraphics[width=\textwidth,bb=55 0 190 195]{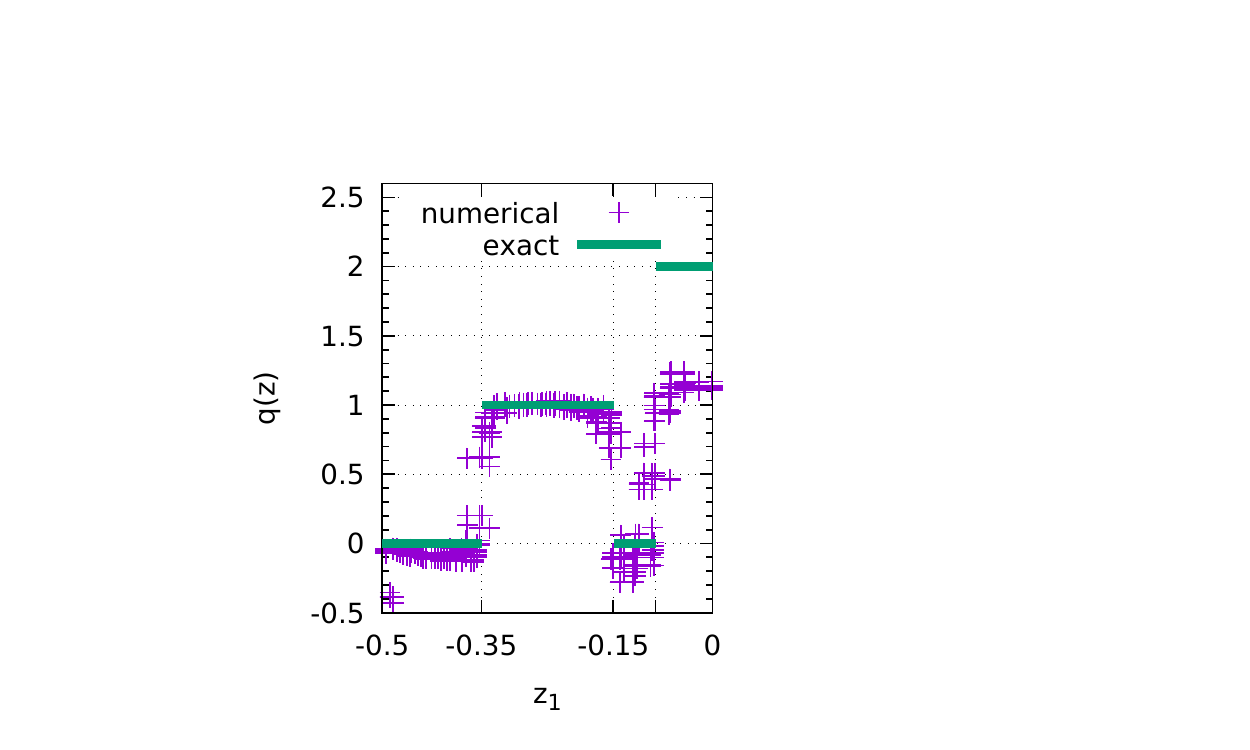}
\end{minipage}
\caption{\label{fig:qlog}Numerical reconstruction of the source $q$ by \eqref{eq:inteq:d2} with $\epsilon=0.01$. The right graph is its section on the dotted segment.}
\end{figure}

In the cut-off method, the computation of $\mathcal{P}^{-}_{-m}$ in \cref{step:solveCSIE1} took $233$ seconds, while that of $\mathcal{J}^{\Omega^{+}}_{-m}$ in \cref{step:BoundaryIntegral} took $1{,}093$ seconds. These computations consume the bulk of the total computing time (of $1{,}393$ seconds). However, due to the pointwise nature of the reconstruction, the proposed algorithm  is amenable to parallelization. By using $48$ threads OpenMP parallel computation, the total times for computing $\mathcal{P}^{-}_{-m}$ in \cref{step:solveCSIE1} and $\mathcal{J}^{\Omega^{+}}_{-m}$ in \cref{step:BoundaryIntegral} are reduced to $11$ and $39$ seconds respectively, and the total time to $62$ seconds.
The numerically reconstructed source $q$ in $\Omega^+$ by the cut-off method with $\epsilon=0$ is shown in Figure~\ref{fig:qcutoff}, and by the ex-log method with $\epsilon=0.01$
is shown in Figure~\ref{fig:qlog}. The corresponding sections on the dotted line are shown in the same figure on the right. We note that the singular support of the internal sources is clearly identified, while the reconstructed values are quantitatively reasonable.


\section*{Acknowledgment}
Authors wish to thank Professor Keith Matthews for fruitful comments on the backgrounds of Montgomery-Matthews' inequality.
The work of H.~Fujiwara was supported by JSPS KAKENHI Grant Numbers JP20H01821 and 21H00999.
The work of K.~ Sadiq  was supported by the Austrian Science Fund (FWF), Project P31053--N32, and by the FWF Project F6801--N36 within the Special Research Program SFB F68 ``Tomography Across the Scales''. 
The work of A.~Tamasan  was supported in part by the NSF grant DMS-1907097.
 
\bibliographystyle{siamplain}

\end{document}